\newtheorem{definition}{Definition}[section]
\newtheorem{proposition}[definition]{Proposition}
\newtheorem{theorem}[definition]{Theorem}
\newtheorem{lemma}[definition]{Lemma}
\newtheorem{remark}{Remark}[section]
\title[Diffusion phenomenon for space-dependent damping]{
On diffusion phenomena for the linear wave equation with
space-dependent damping}
\author{Yuta WAKASUGI}
\email{y-wakasugi@cr.math.sci.osaka-u.ac.jp}
\address{Department of Mathematics, Graduate School of Science,
Osaka University, Osaka, Toyonaka, 560-0043, Japan}
\begin{document}
\begin{abstract}
In this paper, we prove the diffusion phenomenon for
the linear wave equation with space-dependent damping.
We prove that the asymptotic profile of the solution
is given by a solution of the corresponding heat equation in the
$L^2$-sense.
\end{abstract}
\keywords{damped wave equation; space-dependent damping; diffusion phenomenon}
\subjclass[2000]{35L15, 35B40}

\maketitle
\section{Introduction}
In this paper, we consider the asymptotic behavior of
solutions to the wave equation with space-dependent damping:
\begin{equation}
\label{DW}
	\left\{\begin{array}{ll}
		u_{tt}-\Delta u+a(x)u_t=0,&(t,x)\in(0,\infty)\times\mathbf{R}^n,\\
		(u,u_t)(0,x)=(u_0,u_1)(x),&x\in\mathbf{R}^n.
	\end{array}\right.
\end{equation}
Here
$u=u(t,x)$
is real-valued unknown function and
$a(x)=\langle x\rangle^{-\alpha}:=(1+|x|^2)^{-\alpha/2}$
with
$0\le \alpha<1$.
For simplicity, we assume that
\begin{equation}
\label{eqini}
	(u_0,u_1)\in C_0^{\infty}(\mathbf{R}^n),\quad
	{\rm supp}\, (u_0,u_1)\subset \{ x\in\mathbf{R}^n \mid |x|\le L\}
\end{equation}
with some
$L>0$.
We also consider the initial value problem of the corresponding heat equation
started at the time
$\tau\ge 0$:
\begin{equation}
\label{H}
	\left\{\begin{array}{ll}
	a(x)v_t-\Delta v=0,&(t,x)\in (\tau,\infty)\times\mathbf{R}^n,\\
	v(\tau,x)=v_{\tau}(x),&x\in\mathbf{R}^n
	\end{array}\right.
\end{equation}
with initial data
$v_{\tau}(x)\in C_0^{\infty}(\mathbf{R}^n)$.

When
$a(x)=1$,
there are many literature on the asymptotic behavior of solutions to \eqref{DW}.
In this case, it is well known that the asymptotic profile of the solution of \eqref{DW}
is given by the solution of \eqref{H} with
the initial data
$v_0=u_0+u_1$
in several senses
(see \cite{HoOg04, Ik02, MaNi03, Na04, Ni03, YM00}
and see also \cite{CH03, IkNi03, RTY11}
for abstract setting).
On the other hand, Wirth \cite{Wi07} considered
the wave equation with time-dependent damping
$$
	u_{tt}-\Delta u+b(t)u_t=0.
$$
He proved that if the damping is effective,
that is, roughly speaking,
$tb(t)\rightarrow +\infty$
as
$t\rightarrow+\infty$
and
$b(t)^{-1}\notin L^1((0,\infty))$,
then the solution is asymptotically equivalent to that of
the corresponding heat equation
$$
	b(t)v_t-\Delta v=0
$$
(see also \cite{Ya06} for abstract setting).
We also mention that Ikehata, Todorova and Yordanov \cite{ITY13JDE}
recently proved the diffusion phenomenon for
strongly damped wave equations.

Recently, Nishiyama \cite{Ny13} proved the diffusion phenomenon
for abstract damped wave equations.
His result includes space-dependent damping
which does not decay near infinity.
Due to the authors knowledge, there are no results on the asymptotic profile
of solutions for decaying potential cases as \eqref{DW}.
The difficulty is that we cannot use the Fourier transform for \eqref{DW}
as the previous results.

We also refer the reader to
\cite{HKN04, HKN06, HKN07, HKN07JMAA, INZ06, Ka00, Ni06, Ni11AA}
for the asymptotic profile for semilinear problems.

Todorova and Yordanov \cite{TY09} obtained the following
$L^2$-estimates for \eqref{DW} and \eqref{H}:
\begin{align}
\label{L2estDW}
	\|u(t,\cdot)\|_{L^2}&\le C(1+t)^{-\frac{n-2\alpha}{2(2-\alpha)}+\varepsilon},\\
\label{L2estH}
	\|v(t,\cdot)\|_{L^2}&\le C(1+t)^{-\frac{n-2\alpha}{2(2-\alpha)}+\varepsilon}
\end{align}
with arbitrary small
$\varepsilon>0$
(see also \cite{ITY13} for the case $\alpha=1$).
It seems that the decay rate
$\frac{n-2\alpha}{2(2-\alpha)}$
is optimal.
Because, the function
$$
	G(t,x)=t^{-\frac{n-\alpha}{2-\alpha}}e^{-\frac{|x|^{2-\alpha}}{(2-\alpha)^2t}}
$$
formally satisfies the equation
$$
	|x|^{-\alpha}v_t-\Delta v=0
$$
and
$$
	\|G(t,\cdot)\|_{L^2}=Ct^{-\frac{n-2\alpha}{2(2-\alpha)}}
$$
with some constant
$C>0$.
Indeed,
\begin{align*}
	\|G(t,\cdot)\|_{L^2}^2
	&=t^{-\frac{2(n-\alpha)}{2-\alpha}}
		\int_{\mathbf{R}^n}e^{-\frac{2|x|^{2-\alpha}}{(2-\alpha)^2t}}dx\\
	&=Ct^{-\frac{2(n-\alpha)}{2-\alpha}}
		\int_0^{\infty}e^{-\frac{2r^{2-\alpha}}{(2-\alpha)^2t}}r^{n-1}dr\\
	&=Ct^{-\frac{2(n-\alpha)}{2-\alpha}+\frac{n-1}{2-\alpha}+\frac{1}{2-\alpha}}
		\int_0^{\infty}e^{-\frac{2r^{2-\alpha}}{(2-\alpha)^2t}}
			\left(\frac{r^{2-\alpha}}{t}\right)^{\frac{n-1}{2-\alpha}}
			d\left(\frac{r}{t^{1/(2-\alpha)}}\right)\\
	&=Ct^{-\frac{n-2\alpha}{2-\alpha}}.
\end{align*}

We denote the solution operator of \eqref{H} by
$E(t-\tau)$,
that is,
$v(t,x)=E(t-\tau)v_{\tau}(x)$
gives the solution of \eqref{H}.
It is known that
$E(t-\tau)$
is a $0$-th order pseudodifferential operator having the symbol
$$
	e(t-\tau,x,\xi)=e^{-\frac{|\xi|^2}{a(x)}(t-\tau)}+r_0(t-\tau,x,\xi)
$$
with a remainder term
$r_0$
(see Kumano-go \cite{Kutext}).
Our main result is the following.
\begin{theorem}\label{th1}
Let
$n\ge 1$
and let
$u$
be a solution of \eqref{DW}
with initial data
$(u_0,u_1)$
satisfying \eqref{eqini}.
Then we have
\begin{equation}
\label{DP}
	\left\| u(t,\cdot)-E(t)\left[u_0+\frac{1}{a(\cdot)}u_1\right](\cdot)\right\|_{L^2}
	=o(t^{-\frac{n-2\alpha}{2(2-\alpha)}})
\end{equation}
as
$t\rightarrow +\infty$.
\end{theorem}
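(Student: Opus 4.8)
\emph{The plan is to compare the wave solution directly with $v(t,\cdot):=E(t)\big[u_0+\tfrac1a u_1\big]$ and to control the difference $w:=u-v$.} Writing $\gamma=\frac{n-2\alpha}{2(2-\alpha)}$ for the rate in \eqref{DP}, my first step is the reduction. Any solution $h$ of \eqref{H} satisfies $h_{tt}-\Delta h+a h_t=h_{tt}$, since $\Delta h=a h_t$; applying this to $v$ gives that $w$ solves a damped wave equation with source,
\[
 w_{tt}-\Delta w+a(x)w_t=-v_{tt},\qquad w(0)=-\tfrac1a u_1,\quad w_t(0)=u_1-\tfrac1a\Delta\big(u_0+\tfrac1a u_1\big).
\]
The task is then to show $\|w(t)\|_{L^2}=o(t^{-\gamma})$.

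Because the Fourier transform is unavailable for \eqref{DW}, the main tool would be the weighted energy method of Todorova--Yordanov. I would introduce the weight $\psi(t,x)=\frac{A\langle x\rangle^{2-\alpha}}{(2-\alpha)^2(t+t_0)}$ with $A\in(0,1)$ and $t_0$ large, which satisfies the decisive pointwise inequality $\psi_t+a^{-1}|\nabla\psi|^2\le 0$ (for $|x|$ large, where $a(x)\sim|x|^{-\alpha}$; a harmless correction appears near the origin and from $\langle x\rangle$ versus $|x|$). Testing the equation for $w$ against $e^{2\psi}w_t$ and against $e^{2\psi}w$ and integrating by parts, the sign of $\psi_t+a^{-1}|\nabla\psi|^2$ produces the favorable damping, yielding a closed system of differential inequalities for the weighted energies $\int e^{2\psi}(w_t^2+|\nabla w|^2)\,dx$ and $\int e^{2\psi}a\,w^2\,dx$, forced by the source term $\int e^{2\psi}v_{tt}\,w_t\,dx$. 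Integrating these and then removing the weight at the end, at the cost of an arbitrarily small $\varepsilon$ exactly as in \eqref{L2estDW}, would convert weighted decay into the desired $L^2$ statement.

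For this to close I must first supply sharp decay for the source $v_{tt}=a^{-1}\Delta v_t=a^{-1}\Delta\big(a^{-1}\Delta v\big)$. Running the parabolic weighted energy method on \eqref{H} for $v$ and its time derivatives, again without Fourier, should show that each heat-time derivative gains essentially a factor $t^{-1}$ in the natural scaling $|x|\sim t^{1/(2-\alpha)}$, so that $\|e^{\psi}v_{tt}(t)\|_{L^2}$ decays strictly faster than $\|e^{\psi}v(t)\|_{L^2}\sim t^{-\gamma}$ guaranteed by \eqref{L2estH}. This extra time-decay of the source is the mechanism that should push the bound on $w$ strictly below the base rate and into $o(t^{-\gamma})$.

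\emph{The hard part will be obtaining the strict little-$o$ rather than merely the base rate.} The data $(w(0),w_t(0))$ has size $O(1)$, so a naive energy estimate returns only $\|w(t)\|_{L^2}=O(t^{-\gamma+\varepsilon})$, the same rate as $u$ and $v$ individually; the gain must come from a genuine cancellation. Its source is the special structure of the \emph{effective} heat data of $w$, namely $w(0)+\tfrac1a w_t(0)=-a^{-2}\Delta\big(u_0+\tfrac1a u_1\big)$, which carries two extra spatial derivatives, together with the extra time-decay of $v_{tt}$. Making this cancellation quantitative purely through weighted energy identities — carefully tracking the boundary-in-time terms and the coupling between the free evolution of the data and the Duhamel contribution of the source, very likely by splitting $[0,t]$ at $t/2$ and invoking the heat operator $E(t-\tau)$ from intermediate times $\tau$ (which is presumably why \eqref{H} is posed with general initial time) — is where the real work lies, and where the variable, decaying coefficient $a(x)$ makes the bookkeeping delicate.
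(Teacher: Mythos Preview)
Your setup differs from the paper's in a way that matters. The paper does \emph{not} write an equation for $w=u-v$; instead it views $u$ itself as a solution of the heat equation with source, $a(x)u_t-\Delta u=-u_{tt}$, and applies the heat Duhamel formula
\[
u(t)=E(t)u_0-\int_0^t E(t-\tau)\bigl[a^{-1}u_{tt}(\tau)\bigr]\,d\tau.
\]
Integrating the $\tau$-integral by parts on $[0,t/2]$ produces the term $E(t)[a^{-1}u_1]$, which is absorbed into the profile, and leaves three remainders: $\int_{t/2}^t E(t-\tau)[a^{-1}u_{tt}]\,d\tau$, $E(t/2)[a^{-1}u_t(t/2)]$, and $\int_0^{t/2}\partial_t E(t-\tau)[a^{-1}u_t]\,d\tau$. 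Each of these beats $t^{-\gamma}$ because it carries at least one extra time derivative of $u$ (controlled by Theorem~\ref{th2}/Lemma~\ref{lem2}(i)) or one $\partial_t E$ (controlled by Lemma~\ref{lem2}(ii)); the finite-speed cutoff and Lemma~\ref{lem3} handle the $a^{-1}$ losses. Crucially, no ``data term'' is ever asked to decay faster than the base rate.

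Your primary route --- weighted energy for the inhomogeneous damped wave equation satisfied by $w$ --- has a genuine gap exactly where you flag it. The Todorova--Yordanov machinery, applied to $w_{tt}-\Delta w+aw_t=-v_{tt}$, returns a bound of the form $(1+t)^{\frac{n-\alpha}{2-\alpha}-\varepsilon}\int e^{2\psi}a\,w^2\,dx\le C\,I_0[w]+\text{(source terms)}$, and since $I_0[w]=O(1)$ this yields only $\|w(t)\|_{L^2}=O(t^{-\gamma+\varepsilon})$ regardless of how fast $v_{tt}$ decays. The method is blind to the combination $w(0)+a^{-1}w_t(0)$; it sees $w(0)$ and $w_t(0)$ separately. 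Your proposal to exploit that combination via the heat evolution of $w_h$ is circular: showing that the homogeneous damped-wave solution with data $(w(0),w_t(0))$ is close to $E(t)\bigl[w(0)+a^{-1}w_t(0)\bigr]$ is precisely another instance of the theorem you are proving. Your closing fallback --- split at $t/2$ and bring in $E(t-\tau)$ --- does lead somewhere, but only once you abandon the damped-wave viewpoint: noting $a\,w_t-\Delta w=-u_{tt}$ gives $w(t)=-E(t)[a^{-1}u_1]-\int_0^t E(t-\tau)[a^{-1}u_{tt}]\,d\tau$, which is exactly the paper's representation. In short, the decisive idea you are missing is to Duhamel through the \emph{heat} semigroup rather than to energy-estimate through the \emph{wave} equation; the paper's choice is what makes the little-$o$ accessible.
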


\begin{remark}
Our proof needs the compactness of the support of the data.
However, this assumption may be removed by using
the energy concentration lemma (see Lemma \ref{lem3});
but we do not pursue that here.
\end{remark}

The crucial point of the proof of Theorem 1 is the following
weighted energy estimates for higher order derivatives
of solutions to \eqref{DW}.
Let
\begin{equation}
\label{psi}
	\psi(t,x)=A\frac{\langle x\rangle^{2-\alpha}}{1+t},\quad
	A:=\frac{1}{(2-\alpha)^2(2+\delta)}
\end{equation}
with small
$\delta>0$.
We also put
\begin{align*}
	I_0&=\int_{\mathbf{R}^n}e^{2\psi(0,x)}
		(u_0(x)^2+|\nabla u_0(x)|^2+|u_1(x)|^2)dx,\\
	I_1&=\int_{\mathbf{R}^n}e^{2\psi(0,x)}
		(u_{tt}(0,x)^2+|\nabla u_t(0,x)|^2)dx+I_0,\\
	I_2&=\int_{\mathbf{R}^n}e^{2\psi(0,x)}
		(u_{ttt}(0,x)^2+|\nabla u_{tt}(0,x)|^2)dx+I_1
\end{align*}
and by inductively
$$
	I_k=\int_{\mathbf{R}^n}e^{2\psi(0,x)}
		(|\partial_t^{k+1}u(0,x)|^2+|\nabla \partial_t^{k}u(0,x)|^2)dx+I_{k-1}.
$$
Then, we can obtain weighted energy estimates for any order of derivatives:
\begin{theorem}[weighted energy estimates for higher order derivatives]\label{th2}
For any small
$\varepsilon>0$,
there is some
$\delta>0$
such that the following estimates hold:
For any integer
$k\ge 0$,
there exists some constant
$C>0$
such that
for a solution
$u$
of \eqref{DW} with initial data satisfying \eqref{eqini}, we have
\begin{align}
\label{u1}
	(1+t)^{\frac{n-\alpha}{2-\alpha}+2k-\varepsilon}
		\int_{\mathbf{R}^n}e^{2\psi(t,x)}a(x)|\partial_t^ku(t,x)|^2dx&\le CI_k,\\
	\label{u2}
		(1+t)^{\frac{n-\alpha}{2-\alpha}+2k+1-\varepsilon}
			\int_{\mathbf{R}^n}e^{2\psi(t,x)}|\nabla \partial_t^ku(t,x)|^2dx
			&\le CI_k.
\end{align}
\end{theorem}
In particular, we use the following estimates for the proof of Theorem \ref{th1}.
\begin{lemma}\label{lem2}
For any small
$\varepsilon>0$,
there are some constants
$\delta>0$
and
$C>0$
such that the following estimates hold:
\begin{itemize}
\item[(i)]
	For a solution
	$u$
	of \eqref{DW}, we have
	\begin{align}
	\label{ut}
		(1+t)^{\frac{n-\alpha}{2-\alpha}+2-\varepsilon}
			\int_{\mathbf{R}^n}e^{2\psi(t,x)}a(x)u_t(t,x)^2dx&\le CI_1,\\
	\label{nablaut}
		(1+t)^{\frac{n-\alpha}{2-\alpha}+3-\varepsilon}
			\int_{\mathbf{R}^n}e^{2\psi(t,x)}|\nabla u_t(t,x)|^2dx
			&\le CI_1,\\
	\label{utt}
		(1+t)^{\frac{n-\alpha}{2-\alpha}+4-\varepsilon}
			\int_{\mathbf{R}^n}e^{2\psi(t,x)}a(x)u_{tt}(t,x)^2dx
			&\le CI_2,\\
	\label{uttt}
		(1+t)^{\frac{n-\alpha}{2-\alpha}+6-\varepsilon}
			\int_{\mathbf{R}^n}e^{2\psi(t,x)}a(x)u_{ttt}(t,x)^2dx&\le CI_3.
	\end{align}
\item[(ii)]
	For a solution
	$v$
	of \eqref{H}, we have
	\begin{align}
	\label{v}
		&\int_{\mathbf{R}^n}a(x)|v(t,x)|^2dx
			\le \int_{\mathbf{R}^n}a(x)|v_{\tau}(x)|^2dx,\\
	\label{vt}
		&\lefteqn{(1+t-\tau)^{\frac{n-\alpha}{2-\alpha}+2-\varepsilon}
			\int_{\mathbf{R}^n}a(x)|v_t(t,x)|^2dx}\\
\nonumber
		&\quad\le \int_{\mathbf{R}^n}a(x)^{-1}|\Delta v_{\tau}(x)|^2dx\\
\nonumber			
		&\quad\quad+C(1+\tau)^{\frac{n-\alpha}{2-\alpha}+1-\varepsilon}
			\int_{\mathbf{R}^n}e^{2\psi(\tau,x)}|\nabla v_{\tau}(x)|^2dx\\
\nonumber
		&\quad\quad+C(1+\tau)^{\frac{n-\alpha}{2-\alpha}-\varepsilon}
			\int_{\mathbf{R}^n}e^{2\psi(\tau,x)}a(x)v_{\tau}(x)^2dx.
\end{align}
\end{itemize}
\end{lemma}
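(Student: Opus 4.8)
The plan is to obtain part (i) for free from Theorem \ref{th2} and to prove part (ii) by energy arguments adapted to the heat equation \eqref{H}. For part (i), I would simply read off the higher-order weighted energy estimates: \eqref{ut} is \eqref{u1} with $k=1$, \eqref{nablaut} is \eqref{u2} with $k=1$, and \eqref{utt}, \eqref{uttt} are \eqref{u1} with $k=2$ and $k=3$ respectively. The time-exponents $\frac{n-\alpha}{2-\alpha}+2k$ and $\frac{n-\alpha}{2-\alpha}+2k+1$ match the stated powers once $k$ is identified, and the weighted data $I_1,I_2,I_3$ are exactly the quantities occurring in Theorem \ref{th2}, so no further work is required.

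For \eqref{v} I would run the basic energy identity for \eqref{H}: multiplying $a(x)v_t=\Delta v$ by $v$ and integrating by parts gives $\frac{1}{2}\frac{d}{dt}\int a v^2\,dx=-\int|\nabla v|^2\,dx\le 0$, whence $\int a|v(t)|^2\,dx$ is nonincreasing and \eqref{v} follows by integrating from $\tau$ to $t$.

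The substance is \eqref{vt}. The starting point is that $w:=v_t$ again solves $a(x)w_t=\Delta w$, with $w(\tau)=a^{-1}\Delta v_\tau$ obtained by reading \eqref{H} at $t=\tau$; thus $\int a|v_t(\tau)|^2\,dx=\int a^{-1}|\Delta v_\tau|^2\,dx$, which is the leading term on the right of \eqref{vt}. To produce the decay rate $\frac{n-\alpha}{2-\alpha}+2-\varepsilon$ I would set up a weighted energy hierarchy for \eqref{H} with the same weight $e^{2\psi}$ and parameter $A$ as in \eqref{psi}, resting on the algebraic inequality $(2+\delta)|\nabla\psi|^2\le -a(x)\psi_t$. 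This holds because $|\nabla\psi|^2/(a|\psi_t|)=A(2-\alpha)^2|x|^2/\langle x\rangle^2<A(2-\alpha)^2=\frac{1}{2+\delta}$. Completing the square in $\frac{d}{dt}\int e^{2\psi}a v^2\,dx$ and absorbing the cross term by this inequality yields the base rate $\frac{n-\alpha}{2-\alpha}-\varepsilon$; the gradient level gains one further power, and the top level $\int e^{2\psi}a|v_t|^2\,dx=\int e^{2\psi}a^{-1}|\Delta v|^2\,dx$ gains two, reaching the exponent $\frac{n-\alpha}{2-\alpha}+2-\varepsilon$. The lower-order couplings of this hierarchy, evaluated at the initial time $\tau$, are precisely the two remaining terms $(1+\tau)^{\frac{n-\alpha}{2-\alpha}+1-\varepsilon}\int e^{2\psi(\tau)}|\nabla v_\tau|^2\,dx$ and $(1+\tau)^{\frac{n-\alpha}{2-\alpha}-\varepsilon}\int e^{2\psi(\tau)}a v_\tau^2\,dx$.

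I expect the main obstacle to be closing the weighted estimate at the \emph{sharp} rate rather than at a fixed Poincar\'e-type rate. The differential inequality controls $\frac{d}{dt}\int e^{2\psi}a v^2\,dx$ by $-\frac{c}{1+t}\int\psi\,e^{2\psi}a v^2\,dx$, and since $\psi$ degenerates to order $(1+t)^{-1}$ near the origin this cannot be closed by a crude lower bound on $\psi$. Controlling the region where $\psi$ is small — either through the recovered gradient term $\int e^{2\psi}|\nabla v+v\nabla\psi|^2\,dx$ combined with a weighted Hardy/Poincar\'e inequality, or via a near/far spatial decomposition as in \cite{TY09} — is the delicate point, as is splitting the \emph{unweighted} leading term $\int a^{-1}|\Delta v_\tau|^2\,dx$ off from the weighted initial energy, which I would handle by treating $t-\tau\le 1$ separately via the monotonicity in \eqref{v} and running the weighted decay only from an intermediate time.
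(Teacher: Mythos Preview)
Your treatment of part (i) and of \eqref{v} matches the paper (the paper actually proves part (i) and Theorem~\ref{th2} simultaneously in Section~3, so deducing one from the other is only an organizational matter). The divergence is in \eqref{vt}.

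The obstacle you flag --- that completing the square in $\frac{d}{dt}\int e^{2\psi}a\,v^2\,dx$ leaves only a term like $-\frac{c}{1+t}\int\psi\,e^{2\psi}a\,v^2\,dx$, which degenerates near the origin --- is resolved in the paper not by a Hardy/Poincar\'e inequality or a near/far split, but by integrating part of the cross term $2e^{2\psi}v\,\nabla\psi\cdot\nabla v$ by parts once more to produce $e^{2\psi}(\Delta\psi)v^2$. Since $\Delta\psi\ge\bigl(\tfrac{n-\alpha}{2-\alpha}-2\delta_1\bigr)\tfrac{a(x)}{2(1+t)}$ by \eqref{Deltapsi}, this yields the sharp base rate directly (this is the $T_2$-manipulation leading to \eqref{2mult6}, applied verbatim to $v$ to give \eqref{3v2}). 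So the ``main obstacle'' you anticipate is lighter than expected.

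The step you actually miss is how the \emph{unweighted} leading term $\int a^{-1}|\Delta v_\tau|^2\,dx$ arises. Running the full weighted hierarchy up to the $v_t$ level, as you propose, produces
\[
(1+\tau)^{\frac{n-\alpha}{2-\alpha}+2-\varepsilon}\int_{\mathbf{R}^n}e^{2\psi(\tau,x)}a(x)^{-1}|\Delta v_\tau(x)|^2\,dx
\]
as the initial contribution --- with an extra $(1+\tau)$-power and an $e^{2\psi(\tau)}$ weight that your ``intermediate time'' device does not obviously remove, and which is not what \eqref{vt} states. The paper instead drops the weight at the top level: it uses the plain monotonicity $\frac{d}{dt}\int a\,v_t^2\,dx\le 0$ (obtained by multiplying $a\,v_{tt}=\Delta v_t$ by $v_t$), multiplies by $(1+t-\tau)^{\frac{n-\alpha}{2-\alpha}+2-\varepsilon}$, and bounds the resulting error
\[
C(1+t-\tau)^{\frac{n-\alpha}{2-\alpha}+1-\varepsilon}\int a\,v_t^2\,dx
\le C(1+t)^{\frac{n-\alpha}{2-\alpha}+1-\varepsilon}\int e^{2\psi}a\,v_t^2\,dx
\]
by the \emph{time-integrated} weighted estimate \eqref{3v4} coming from the lower levels of the hierarchy. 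The initial term at $t=\tau$ is then exactly $\int a\,v_t(\tau)^2\,dx=\int a^{-1}|\Delta v_\tau|^2\,dx$, with no extra power of $(1+\tau)$ and no weight. This mixing --- unweighted monotonicity at the top, weighted hierarchy only through the integrated bootstrap --- is the mechanism that delivers \eqref{vt} in the stated form and is the piece your outline is missing.
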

We also use an energy concentration lemma.
\begin{lemma}[exponential decay outside parabolic regions]\label{lem3}
For any small
$\varepsilon>0$,
there is some
$\delta>0$
such that the following holds:
let
$$
	0<\rho<1-\alpha,\quad
	0<\mu<2A
$$
and
$$
	\Omega_{\rho}(t):=
	\{ x\in\mathbf{R}^n \mid \langle x\rangle^{2-\alpha}\ge (1+t)^{1+\rho} \}.
$$
We also assume that
$v$
is a solution of \eqref{H} with
$\tau=0$.
Then we have
\begin{equation}
\label{Expdecay}
	\int_{\Omega_{\rho}(t)}v(t,x)^2dx\le C(1+t)^{\frac{\alpha}{2-\alpha}}
	e^{-(2A-\mu)(1+t)^{\rho}}\int_{\mathbf{R}^n}e^{2\psi(0,x)}a(x)v_0(x)^2dx,
\end{equation}
where
$C>0$
is a constant depending on
$\delta, \rho$
and
$\mu$.
\end{lemma}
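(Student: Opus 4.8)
The plan is to reduce the statement to two ingredients: a global weighted $L^2$ bound on $v$ carrying the weight $e^{2\psi}$, together with a pointwise estimate of that weight on $\Omega_\rho(t)$. Set $F(t):=\int_{\mathbf{R}^n}e^{2\psi(t,x)}a(x)v(t,x)^2\,dx$. On $\Omega_\rho(t)$ one has $\psi(t,x)=A\langle x\rangle^{2-\alpha}/(1+t)\ge A(1+t)^{\rho}$, so the weight is exponentially large there; writing $v^2=\langle x\rangle^{\alpha}e^{-2\psi}\cdot e^{2\psi}av^2$ converts any bound on $F$ into exponential decay of $\int_{\Omega_\rho}v^2$. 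Thus the whole proof rests on controlling $F(t)$ in terms of $F(0)=\int_{\mathbf{R}^n}e^{2\psi(0,\cdot)}a v_0^2\,dx$.

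First I would derive the weighted energy identity. Multiplying \eqref{H} (with $\tau=0$) by $e^{2\psi}v$ and integrating by parts — boundary terms vanish since $v$ and $\nabla v$ inherit Gaussian-type decay from the kernel, and the choice $A=1/((2-\alpha)^2(2+\delta))$ keeps $e^{2\psi}v^2$ integrable — gives, using $\psi_t=-\psi/(1+t)$,
\[
\tfrac12 F'(t)+\int_{\mathbf{R}^n}\frac{\psi}{1+t}e^{2\psi}a v^2\,dx+\int_{\mathbf{R}^n} e^{2\psi}|\nabla v|^2\,dx=\int_{\mathbf{R}^n} e^{2\psi}\bigl(\Delta\psi+2|\nabla\psi|^2\bigr)v^2\,dx .
\]
A direct computation gives $\nabla\psi=\frac{A(2-\alpha)}{1+t}a(x)x$ and $\Delta\psi=\frac{A(2-\alpha)}{1+t}\langle x\rangle^{-\alpha}(n-\alpha+\alpha\langle x\rangle^{-2})$, whence $2|\nabla\psi|^2=\frac{2A^2(2-\alpha)^2}{(1+t)^2}(\langle x\rangle^{2-2\alpha}-\langle x\rangle^{-2\alpha})$, while $\frac{\psi}{1+t}a=\frac{A}{(1+t)^2}\langle x\rangle^{2-2\alpha}$.

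The heart of the argument is comparing these two quantities. Because $2A(2-\alpha)^2=2/(2+\delta)<1$, the leading terms satisfy $\frac{\psi}{1+t}a-2|\nabla\psi|^2\ge\frac{\delta A}{2+\delta}\frac{\langle x\rangle^{2-2\alpha}}{(1+t)^2}\ge 0$; this is exactly where the parameter $\delta>0$, i.e. the precise value of $A$, is used. The only genuinely bad term is $\Delta\psi$, of lower order $O((1+t)^{-1}\langle x\rangle^{-\alpha})$, which is dominated by this gain once $\langle x\rangle^{2-\alpha}\gtrsim(1+t)$. Hence the coefficient $B:=\frac{\psi}{1+t}a-\Delta\psi-2|\nabla\psi|^2$ is nonnegative outside the inner region $R(t)=\{\langle x\rangle^{2-\alpha}\lesssim(1+t)\}$, where moreover $e^{2\psi}$ is bounded by an absolute constant and $-B\le C(1+t)^{-1}a$. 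Discarding the nonnegative terms on the left and invoking the unweighted bound \eqref{v}, $\int a v^2\le\int a v_0^2\le F(0)$, yields $F'(t)\le\frac{C}{1+t}F(0)$, so that $F(t)\le C(1+\log(1+t))F(0)$: at most logarithmic growth, which is all that is needed.

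Finally I would carry out the weight conversion. Substituting $s=\langle x\rangle^{2-\alpha}\in[(1+t)^{1+\rho},\infty)$ on $\Omega_\rho(t)$, the factor $\langle x\rangle^{\alpha}e^{-2\psi}$ becomes $s^{\alpha/(2-\alpha)}e^{-2As/(1+t)}$, which increases up to $s^{\ast}\sim(1+t)$ and decreases afterwards; since the boundary value $(1+t)^{1+\rho}$ exceeds $s^{\ast}$ for large $t$, the supremum over $\Omega_\rho(t)$ is attained at the boundary and equals $C(1+t)^{(1+\rho)\alpha/(2-\alpha)}e^{-2A(1+t)^{\rho}}$. Combining with $\int_{\Omega_\rho}e^{2\psi}a v^2\le F(t)$ and the bound on $F$, and absorbing the logarithm together with the surplus power $(1+t)^{\rho\alpha/(2-\alpha)}$ into the exponential, produces $\int_{\Omega_\rho}v^2\le C(1+t)^{\alpha/(2-\alpha)}e^{-(2A-\mu)(1+t)^{\rho}}F(0)$, as claimed. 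The main obstacle is the pointwise sign analysis of $\Delta\psi+2|\nabla\psi|^2$ against $\frac{\psi}{1+t}a$ and the bookkeeping that collapses the surplus polynomial factor into the stated $(1+t)^{\alpha/(2-\alpha)}$ prefactor; here the constraints matter — $\mu<2A$ is precisely what leaves a strictly positive exponential gain after absorption, and $0<\rho<1-\alpha$ is the admissible range carried along for the application of the lemma in Theorem \ref{th1}.
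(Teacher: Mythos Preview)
Your argument is correct, but it takes a longer route than the paper's. The paper obtains the clean monotonicity $F(t)\le F(0)$ directly: instead of integrating the cross term $2e^{2\psi}v\nabla\psi\cdot\nabla v$ by parts to produce $\Delta\psi+2|\nabla\psi|^2$ on the right-hand side, the paper rewrites it (as in the derivation of \eqref{2mult6}) so that the combination $|\nabla v|^2+4v\nabla v\cdot\nabla\psi+((-\psi_t)a+2|\nabla\psi|^2)v^2$ appears and can be bounded below by a sum of squares, using the key inequality \eqref{psita}. Since the leftover $\Delta\psi$ term is also nonnegative, this yields $F'(t)\le 0$ immediately, with no inner/outer splitting and no logarithmic loss. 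For the pointwise step the paper sacrifices $\mu$ at the outset, proving the uniform bound $e^{2\psi}a(x)\ge c(1+t)^{-\alpha/(2-\alpha)}e^{(2A-\mu)\langle x\rangle^{2-\alpha}/(1+t)}$ valid for all $x$, and then simply reads off the result on $\Omega_\rho(t)$. Your approach---accepting a weaker bound $F(t)\le C(1+\log(1+t))F(0)$ via the region $R(t)$ and then absorbing both the logarithm and the surplus power $(1+t)^{\rho\alpha/(2-\alpha)}$ into the exponential gap $\mu$---is self-contained and avoids citing \eqref{3v2}, at the cost of some bookkeeping. Either way, the role of $\delta>0$ (to make $2A(2-\alpha)^2<1$) and the role of $\mu<2A$ (to leave exponential room) are the same.
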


This paper is organized as follows.
In the next section,
we introduce a basic weighted energy method.
This method was originally developed by
Todorova and Yordanov \cite{TY01}
and refined by themselves \cite{TY09}
and Nishihara \cite{Ni10}
to fit for the space-dependent damping.
In Section 3, we prove Lemmas \ref{lem2} and \ref{lem3} and Thoerm \ref{th2}
by using the basic weighted energy estimates obtained in Section 2.
In the final section, we give a proof of the main theorem.

Finally, we explain some notation used in this paper.
First, we note that the letter
$C$
indicates the generic constant, which may change from line to line.
We also use the symbols $\lesssim$ and $\sim$.
The relation $f\lesssim g$ means $f\le Cg$ with some constant $C>0$
and $f\sim g$ means $f\lesssim g$ and $g\lesssim f$.
The bracket
$\langle \cdot \rangle$
is defined by
$\langle x\rangle:=\sqrt{1+|x|^2}$.
We denote the usual
$L^2$-norm by
$\|\cdot\|_{L^2}$,
that is,
$$
	\|f(\cdot)\|_{L^2}:=\left(\int_{\mathbf{R}^n}|f(x)|^2dx\right)^{1/2}.
$$

\section{Basic weighted energy estimates}
In this section, we give an estimate of a weighted
$L^2$-norm
$$
	\int_{\mathbf{R}^n}e^{2\psi(t,x)}a(x)u(t,x)^2dx
$$
and a weighted energy
$$
	\int_{\mathbf{R}^n}e^{2\psi(t,x)}(u_t(t,x)^2+|\nabla u(t,x)|^2)dx.
$$
The following estimate was essentially already obtained by
Nishihara \cite{Ni10}.
However, for the sake of completeness, we shall give a proof in this section.
\begin{proposition}[Basic weighted energy estimates]\label{propBE}
For any small
$\varepsilon>0$,
there is some
$\delta>0$
having the following property:
let
$u$
be a solution of \eqref{DW} with initial data
$(u_0,u_1)$
satisfying \eqref{eqini}.
Then we have
\begin{align*}
	&(1+t)^{\frac{n-\alpha}{2-\alpha}+1-\varepsilon}
		\int_{\mathbf{R}^n}e^{2\psi(t,x)}(u_t(t,x)^2+|\nabla u(t,x)|^2)dx\\
	&\quad+(1+t)^{\frac{n-\alpha}{2-\alpha}-\varepsilon}
		\int_{\mathbf{R}^n}e^{2\psi(t,x)}a(x)u(t,x)^2dx\\
	&\quad+\int_0^t\Big\{
		(1+\tau)^{\frac{n-\alpha}{2-\alpha}-\varepsilon}
			\int_{\mathbf{R}^n}e^{2\psi(\tau,x)}(u_t^2+|\nabla u|^2)(\tau,x)dx\\
	&\qquad+(1+\tau)^{\frac{n-\alpha}{2-\alpha}+1-\varepsilon}
			\int_{\mathbf{R}^n}e^{2\psi(\tau,x)}(-\psi_t(\tau,x))(u_t^2+|\nabla u|^2)(\tau,x)dx\\
	&\qquad+(t_0+t)^{\frac{n-\alpha}{2-\alpha}-\varepsilon}
			\int_{\mathbf{R}^n}e^{2\psi(\tau,x)}|\nabla\psi(\tau,x)|^2u(\tau,x)^2dx\\
	&\qquad+(1+\tau)^{\frac{n-\alpha}{2-\alpha}-1-\varepsilon}
		\int_{\mathbf{R}^n}e^{2\psi(\tau,x)}a(x)u(\tau,x)^2dx\\
	&\qquad+(1+\tau)^{\frac{n-\alpha}{2-\alpha}+1-\varepsilon}
		\int_{\mathbf{R}^n}e^{2\psi(\tau,x)}a(x)u_t(\tau,x)^2dx\Big\}d\tau\\
	&\le CI_0.
\end{align*}
\end{proposition}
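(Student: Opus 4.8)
The plan is to run the weighted energy method of Todorova--Yordanov and Nishihara, built on two multiplier identities linked by the algebraic structure of the weight $\psi$ in \eqref{psi}. The key preliminary fact, forced by the choice of $A$, is a structural inequality: since $\psi_t=-\psi/(1+t)$ and $\nabla\psi=A(2-\alpha)\langle x\rangle^{-\alpha}x/(1+t)$, one gets $|\nabla\psi|^2\le A^2(2-\alpha)^2\langle x\rangle^{2-2\alpha}/(1+t)^2$, hence
\[
	\frac{|\nabla\psi|^2}{a(x)}\le A(2-\alpha)^2(-\psi_t)=\frac{1}{2+\delta}(-\psi_t),
\]
which controls the gradient of the weight by the favourable time-derivative dissipation and is the linchpin of every cross-term estimate. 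Multiplying \eqref{DW} by $e^{2\psi}u_t$ and integrating over $\mathbf{R}^n$ produces $\tfrac12\frac{d}{dt}\int e^{2\psi}(u_t^2+|\nabla u|^2)$, the damping term $\int e^{2\psi}a u_t^2$, the good term $\int e^{2\psi}(-\psi_t)(u_t^2+|\nabla u|^2)$, and an indefinite cross term $2\int e^{2\psi}u_t\nabla\psi\cdot\nabla u$. Splitting this cross term with a parameter $\theta\in(\tfrac1{2+\delta},1)$ and invoking the structural inequality absorbs it while leaving strictly positive multiples of both dissipation terms, giving a first differential inequality
\[
	\tfrac12\tfrac{d}{dt}\!\int e^{2\psi}(u_t^2+|\nabla u|^2)+c_1\!\int e^{2\psi}(-\psi_t)(u_t^2+|\nabla u|^2)+c_2\!\int e^{2\psi}a u_t^2\le0.
\]

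Next I would multiply \eqref{DW} by $e^{2\psi}u$. The resulting second identity expresses $\int e^{2\psi}|\nabla u|^2$ as a total time derivative of $\int e^{2\psi}u u_t+\tfrac12\int e^{2\psi}a u^2$, plus $\int e^{2\psi}u_t^2$, the cross term $-2\int e^{2\psi}u\nabla\psi\cdot\nabla u$, and $\psi_t$-terms. Estimating the cross term by Young's inequality generates precisely the zeroth-order quantity $\int e^{2\psi}|\nabla\psi|^2u^2$ that appears in the statement. The divergence identity $\int\operatorname{div}(e^{2\psi}\nabla\psi\,u^2)\,dx=0$ then links this quantity to $\int e^{2\psi}\Delta\psi\,u^2$; since a direct computation gives $\Delta\psi\sim\frac{n-\alpha}{2-\alpha}\,\frac{a(x)}{1+t}$ for large $|x|$, this is exactly the point at which the exponent $\frac{n-\alpha}{2-\alpha}$ enters the analysis.

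I would then multiply the first inequality by $(1+t)^{\frac{n-\alpha}{2-\alpha}+1-\varepsilon}$, add a small multiple of the time-weighted second identity, and integrate on $[0,t]$. Integrating the leading derivative by parts produces the unfavourable lower-order term $\sim(1+\tau)^{\frac{n-\alpha}{2-\alpha}-\varepsilon}\int e^{2\psi}(u_t^2+|\nabla u|^2)$, which must be reabsorbed. For this I would split $\mathbf{R}^n$ into the parabolic region $\langle x\rangle^{2-\alpha}\lesssim1+t$ and its complement: the $u_t^2$ part is dominated by $a u_t^2$ where $\langle x\rangle^{\alpha}\lesssim1+t$ and by $(-\psi_t)u_t^2$ otherwise, and these two regions cover $\mathbf{R}^n$ because $\alpha<2-\alpha$; the $|\nabla u|^2$ part is absorbed by $(-\psi_t)|\nabla u|^2$ in the outer region and, in the inner region where $-\psi_t$ degenerates, by the second identity through $\int e^{2\psi}u_t^2$, $\int e^{2\psi}|\nabla\psi|^2u^2$ and $\int e^{2\psi}a u^2$. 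Taking $\delta$, $\theta$ and the coupling constant small (hence $\varepsilon$ small) keeps every surviving coefficient positive, and the boundary contributions at $\tau=0$ are bounded by $I_0$ because the compact support of the data keeps $e^{2\psi(0,x)}$ bounded.

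The hard part will be this last balancing step: arranging the two multipliers and the region decomposition so that all six dissipation integrals in the statement survive simultaneously with positive coefficients. The delicate point is the control of $\int e^{2\psi}|\nabla u|^2$ inside the parabolic region, where neither the damping $a u_t^2$ nor the $-\psi_t$ dissipation is effective for the gradient; this is exactly what forces the use of the second multiplier together with the weighted-Hardy relation above, and the value $\frac{n-\alpha}{2-\alpha}$ is the sharp threshold at which the whole balance closes.
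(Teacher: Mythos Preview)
Your plan assembles the right ingredients --- the two multipliers, the structural inequality $|\nabla\psi|^2\le\frac{1}{2+\delta}\,a(x)(-\psi_t)$, and the identification of $\Delta\psi\sim\frac{n-\alpha}{2-\alpha}\,\frac{a(x)}{1+t}$ as the origin of the exponent --- and you correctly flag the control of $|\nabla u|^2$ in the inner parabolic region as the delicate point. But the combination you describe does not close there. Multiplying the first identity by $(1+t)^{\frac{n-\alpha}{2-\alpha}+1-\varepsilon}$ and differentiating the time weight leaves a bad term
\[
\tfrac12\Bigl(\tfrac{n-\alpha}{2-\alpha}+1-\varepsilon\Bigr)(1+\tau)^{\frac{n-\alpha}{2-\alpha}-\varepsilon}\int e^{2\psi}|\nabla u|^2\,dx
\]
whose coefficient is a fixed positive number. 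In the inner region $\langle x\rangle^{2-\alpha}\lesssim 1+\tau$ the only positive contribution to this same quantity comes from the second multiplier, and with your \emph{small} coupling constant it is of size $\nu\delta_3$, which cannot absorb a coefficient of order one. A region decomposition does not rescue this: the $\delta_3|\nabla u|^2$ term from the second multiplier is global and its smallness is global. Enlarging $\nu$ instead makes the bad $\int e^{2\psi}u_t^2$ from the second multiplier large, and controlling that forces finite-propagation arguments with a large time shift $t_0$ that are absent from your outline.

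The paper closes this by a two-stage scheme rather than your one-stage one. It first multiplies the $e^{2\psi}u_t$ identity by $(t_0+t)^{\alpha}$: finite propagation gives $a(x)(t_0+t)^{\alpha}\gtrsim 1$ on $\operatorname{supp}u$, which both turns the damping term into a constant lower bound and, crucially, makes the derivative of the time weight throw off $(u_t^2+|\nabla u|^2)$ with the \emph{small} coefficient $\frac{\alpha}{2}(t_0+t)^{\alpha-1}$. This can now be absorbed by $\nu\delta_3$ for $t_0$ large. Combining with $\nu$ times the second identity and then multiplying by the lower power $(t_0+t)^{\frac{n-\alpha}{2-\alpha}-\varepsilon}$ yields the closed estimate \eqref{2energy2}, which already bounds $\int_0^t(t_0+\tau)^{\frac{n-\alpha}{2-\alpha}-\varepsilon}\int e^{2\psi}(u_t^2+|\nabla u|^2)\,dx\,d\tau$. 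Only then does the paper multiply the first identity by $(t_0+t)^{\frac{n-\alpha}{2-\alpha}+1-\varepsilon}$; the bad term it produces is exactly the quantity just bounded, and adding a small multiple of \eqref{2energy2} finishes. So the missing idea is this intermediate $(t_0+t)^{\alpha}$ step exploiting compact support, which converts the obstruction from a fixed-size coefficient into an $o(1)$ one that a small $\nu$ can handle.
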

\begin{proof}
Form \eqref{psi}, it is easy to see that
\begin{align}
\label{psit}
	-\psi_t&=\frac{1}{1+t}\psi,\\
\label{psix}
	\nabla\psi&=A\frac{(2-\alpha)\langle x\rangle^{-\alpha}x}{1+t},\\
\label{Deltapsi}
	\Delta\psi&=
		A(2-\alpha)(n-\alpha)\frac{\langle x\rangle^{-\alpha}}{1+t}
	+A(2-\alpha)\alpha\frac{\langle x\rangle^{-2-\alpha}}{1+t}\\
\nonumber
	&\ge \frac{n-\alpha}{(2-\alpha)(2+\delta)}\frac{a(x)}{1+t}\\
\nonumber
	&=:\left( \frac{n-\alpha}{2(2-\alpha)}-\delta_1\right)\frac{a(x)}{1+t}.
\end{align}
Here and after,
$\delta_i\ (i=1,2,\ldots)$
denote positive constants depending only on
$\delta$
such that
$$
	\delta_i\rightarrow 0^+ \quad \mbox{as} \quad\delta\rightarrow 0^+.
$$
We also have
\begin{align}
\label{psita}
	(-\psi_t)a(x)&=
		A\frac{\langle x\rangle^{2-2\alpha}}{(1+t)^{2}}\\
\nonumber
	&\ge\frac{1}{(2-\alpha)^2A}
		A^2(2-\alpha)^2\frac{\langle x\rangle^{-2\alpha}|x|^2}{(1+t)^{2}}\\
\nonumber
	&=(2+\delta)|\nabla\psi|^2.
\end{align}
By multiplying \eqref{DW} by
$e^{2\psi}u_t$,
it follows that
\begin{align}
\label{2mult1}
	\lefteqn{\frac{\partial}{\partial t}\left[ \frac{e^{2\psi}}{2}(u_t^2+|\nabla u|^2)\right]
		-\nabla\cdot(e^{2\psi}u_t\nabla u)}\\
\nonumber
	&\quad+e^{2\psi}\left(a(x)-\frac{|\nabla\psi|^2}{-\psi_t}-\psi_t\right)u_t^2
		+\underbrace{\frac{e^{2\psi}}{-\psi_t}|\psi_t\nabla u-u_t\nabla\psi|^2}_{T_1}
	=0.
\end{align}
Using the Schwarz inequality and \eqref{psita}, we can calculate
\begin{align*}
	T_1&=
	\frac{e^{2\psi}}{-\psi_t}(\psi_t^2|\nabla u|^2
		-2\psi_tu_t\nabla u\cdot\nabla\psi+u_t^2|\nabla\psi|^2)\\
	&\ge\frac{e^{2\psi}}{-\psi_t}
		\left(\frac{1}{5}\psi_t^2|\nabla u|^2-\frac{1}{4}u_t^2|\nabla\psi|^2\right)\\
	&\ge e^{2\psi}
		\left(\frac{1}{5}(-\psi_t)|\nabla u|^2-\frac{a(x)}{4(2+\delta)}u_t^2\right).
\end{align*}
This inequality and (\ref{psita}) lead
\begin{align}
\label{2mult2}
	\lefteqn{\frac{\partial}{\partial t}\left[ \frac{e^{2\psi}}{2}(u_t^2+|\nabla u|^2)\right]
		-\nabla\cdot(e^{2\psi}u_t\nabla u)}\\
\nonumber
	&\quad+e^{2\psi}
		\left\{\left(\frac{1}{4}a(x)
		-\psi_t\right)u_t^2+\frac{-\psi_t}{5}|\nabla u|^2\right\}
	\le 0.
\end{align}
Integrating over
$\mathbf{R}^n$,
we obtain
\begin{align}
\label{2mult3}
	\lefteqn{\frac{d}{dt}\int_{\mathbf{R}^n}\frac{e^{2\psi}}{2}(u_t^2+|\nabla u|^2)dx}\\
\nonumber
	&\quad+\int_{\mathbf{R}^n}e^{2\psi}
		\left\{\left(\frac{1}{4}a(x)
		-\psi_t\right)u_t^2+\frac{-\psi_t}{5}|\nabla u|^2\right\}dx
	\le0.
\end{align}

On the other hand,
we multiply \eqref{DW} by
$e^{2\psi}u$
and have
\begin{align}
\label{2mult4}
	\lefteqn{\frac{\partial}{\partial t}\left[e^{2\psi}\left(uu_t+\frac{a(x)}{2}u^2\right)\right]
		-\nabla\cdot (e^{2\psi}u\nabla u)}\\
\nonumber
	&\quad+e^{2\psi}\{|\nabla u|^2+(-\psi_t)a(x)u^2
	+\underbrace{2u\nabla\psi\cdot\nabla u}_{T_2}-2\psi_tuu_t-u_t^2\}
	=0.
\end{align}
We can rewrite the term $e^{2\psi}T_2$ as
\begin{align*}
	e^{2\psi}T_2&=
	4e^{2\psi}u\nabla\psi\cdot\nabla u-2e^{2\psi}u\nabla\psi\cdot\nabla u\\
	&=4e^{2\psi}u\nabla\psi\cdot\nabla u
		-\nabla\cdot(e^{2\psi}u^2\nabla\psi)
		+2e^{2\psi}u^2|\nabla\psi|^2+e^{2\psi}(\Delta\psi)u^2.
\end{align*}
By substituting this and using \eqref{Deltapsi}, it follows from \eqref{2mult4} that
\begin{align}
\label{2mult5}
	\lefteqn{\frac{\partial}{\partial t}\left[e^{2\psi}
		\left(uu_t+\frac{a(x)}{2}u^2\right)\right]
	-\nabla\cdot(e^{2\psi}(u\nabla u+u^2\nabla\psi))}\\
\nonumber
	&+e^{2\psi}\Big\{\underbrace{|\nabla u|^2+4u\nabla u\cdot\nabla\psi
		+((-\psi_t)a(x)+2|\nabla\psi|^2)u^2}_{T_3}\\
\nonumber
	&+\left(\frac{n-\alpha}{2-\alpha}-2\delta_1\right)\frac{a(x)}{2(1+t)}u^2
		-2\psi_tuu_t-u_t^2\Big\} \le 0.
\end{align}
By the Schwarz inequality, we can estimate the term $T_3$ as
\begin{align*}
	T_3&=|\nabla u|^2+4u\nabla u\cdot\nabla\psi\\
	&\quad+\left\{\left(1-\frac{\delta}{3}\right)(-\psi_t)a(x)
		+2|\nabla\psi|^2\right\}u^2+\frac{\delta}{3}(-\psi_t)a(x)u^2\\
	&\ge |\nabla u|^2+4u\nabla u\cdot\nabla\psi\\
	&\quad+\left(4+\frac{\delta}{3}-\frac{\delta^2}{3}\right)|\nabla\psi|^2u^2
		+\frac{\delta}{3}(-\psi_t)a(x)u^2\\
	&=\left(1-\frac{4}{4+\delta_2}\right)|\nabla u|^2+\delta_2|\nabla\psi|^2u^2\\
	&\quad+\left| \frac{2}{\sqrt{4+\delta_2}}\nabla u+\sqrt{4+\delta_2}u\nabla\psi\right|^2
			+\frac{\delta}{3}(-\psi_t)a(x)u^2\\
	&\ge\delta_3(|\nabla u|^2+|\nabla\psi|^2u^2)+\frac{\delta}{3}(-\psi_t)a(x)u^2.
\end{align*}
Substituting this into \eqref{2mult5}, we obtain
\begin{align*}
	\lefteqn{\frac{\partial}{\partial t}\left[e^{2\psi}
		\left(uu_t+\frac{a(x)}{2}u^2\right)\right]
		-\nabla\cdot(e^{2\psi}(u\nabla u+u^2\nabla\psi))}\\
\nonumber
	&\quad+e^{2\psi}\delta_3|\nabla u|^2\\
\nonumber
	&\quad+e^{2\psi}\left(\delta_3|\nabla\psi|^2+\frac{\delta}{3}(-\psi_t)a(x)
		+\left(\frac{n-\alpha}{2-\alpha}-2\delta_1\right)\frac{a(x)}{2(1+t)}\right)u^2\\
\nonumber
	&\quad+e^{2\psi}(-2\psi_tuu_t-u_t^2)\\
\nonumber
	&\le 0.
\end{align*}
Integration over
$\mathbf{R}^n$
leads
\begin{align}
\label{2mult6}
	\lefteqn{\frac{d}{dt}\int_{\mathbf{R}^n}e^{2\psi}
		\left(uu_t+\frac{a(x)}{2}u^2\right)dx}\\
\nonumber
	&\quad+\int_{\mathbf{R}^n}e^{2\psi}\delta_3|\nabla u|^2dx\\
\nonumber
	&\quad+\int_{\mathbf{R}^n}e^{2\psi}\left(\delta_3|\nabla\psi|^2+\frac{\delta}{3}(-\psi_t)a(x)
		+\left(\frac{n-\alpha}{2-\alpha}-2\delta_1\right)\frac{a(x)}{2(1+t)}\right)u^2dx\\
\nonumber
	&\quad+\int_{\mathbf{R}^n}e^{2\psi}(-2\psi_tuu_t-u_t^2)dx\\
\nonumber
	&\le 0.
\end{align}
By multiplying \eqref{2mult3} by
$(t_0+t)^{\alpha}$
with large parameter
$t_0$,
which determined later, we obtain
\begin{align}
\label{2mult7}
	\lefteqn{\frac{d}{dt}\left[
		(t_0+t)^{\alpha}\int_{\mathbf{R}^n}\frac{e^{2\psi}}{2}(u_t^2+|\nabla u|^2)dx
	\right]}\\
\nonumber
	&-\frac{\alpha}{2}(t_0+t)^{\alpha-1}\int_{\mathbf{R}^n}e^{2\psi}(u_t^2+|\nabla u|^2)dx\\
\nonumber
	&\quad+\int_{\mathbf{R}^n}e^{2\psi}
		\left\{\left(\frac{1}{4}
		+(t_0+t)^{\alpha}(-\psi_t)\right)u_t^2+(t_0+t)^{\alpha}\frac{-\psi_t}{5}|\nabla u|^2\right\}dx
	\le 0,
\end{align}
provided that
$t_0$
is sufficiently large, since
$a(x)\ge \langle t+L\rangle^{-\alpha}$
holds by the finite propagation speed property.
Let
$\nu$
be a positive small parameter depending on
$\delta$.
By \eqref{2mult7}$+\nu$\eqref{2mult6}, we have
\begin{align}
\label{2mult8}
	\lefteqn{\frac{d}{dt}\left[
		\int_{\mathbf{R}^n}e^{2\psi}\left\{
			\frac{(t_0+t)^{\alpha}}{2}(u_t^2+|\nabla u|^2)
			+\nu\left(uu_t+\frac{a(x)}{2}u^2\right)
		\right\}dx
	\right]}\\
\nonumber
	&\quad+\int_{\mathbf{R}^n}e^{2\psi}
		\left(\frac{1}{4}-\nu-\frac{\alpha}{2}(t_0+t)^{\alpha-1}
		+(t_0+t)^{\alpha}(-\psi_t)\right)u_t^2dx\\
\nonumber
	&\quad+\int_{\mathbf{R}^n}e^{2\psi}
		\left(\nu\delta_3-\frac{\alpha}{2}(t_0+t)^{\alpha-1}+(t_0+t)^{\alpha}\frac{-\psi_t}{5}
			\right)|\nabla u|^2dx\\
\nonumber
	&\quad+\nu\int_{\mathbf{R}^n}e^{2\psi}\left(\delta_3|\nabla\psi|^2+\frac{\delta}{3}(-\psi_t)a(x)
		+\left(\frac{n-\alpha}{2-\alpha}-2\delta_1\right)\frac{a(x)}{2(1+t)}\right)u^2dx\\
\nonumber
	&\quad+\nu\int_{\mathbf{R}^n}e^{2\psi}2(-\psi_t)uu_tdx\\
\nonumber
	&\le 0.
\end{align}
By the Schwarz inequality,
the last term of the left hand side of the above inequality can be estimated as
\begin{align*}
	|2(-\psi_t)uu_t|
	&\le \frac{\delta}{3}(-\psi_t)(t_0+t)^{-\alpha}u^2
	+\frac{3}{\delta}(-\psi_t)(t_0+t)^{\alpha}u_t^2\\
	&\le\frac{\delta}{3}(-\psi_t)a(x)u^2
	+\frac{3}{\delta}(-\psi_t)(t_0+t)^{\alpha}u_t^2.
\end{align*}
Therefore, it follows from \eqref{2mult8} that
\begin{align}
\label{2mult9}
	\lefteqn{\frac{d}{dt}\left[
		\int_{\mathbf{R}^n}e^{2\psi}\left\{
			\frac{(t_0+t)^{\alpha}}{2}(u_t^2+|\nabla u|^2)
			+\nu\left(uu_t+\frac{a(x)}{2}u^2\right)
		\right\}dx
	\right]}\\
\nonumber
	&\quad+\int_{\mathbf{R}^n}e^{2\psi}
		\left(\frac{1}{4}-\nu-\frac{\alpha}{2}(t_0+t)^{\alpha-1}
		+\left(1-\frac{3\nu}{\delta}\right)(t_0+t)^{\alpha}(-\psi_t)\right)u_t^2dx\\
\nonumber
	&\quad+\int_{\mathbf{R}^n}e^{2\psi}
		\left(\nu\delta_3-\frac{\alpha}{2}(t_0+t)^{\alpha-1}+(t_0+t)^{\alpha}\frac{-\psi_t}{5}
			\right)|\nabla u|^2dx\\
\nonumber
	&\quad+\nu\int_{\mathbf{R}^n}e^{2\psi}\left(\delta_3|\nabla\psi|^2
		+\left(\frac{n-\alpha}{2-\alpha}-2\delta_1\right)\frac{a(x)}{2(1+t)}\right)u^2dx\\
\nonumber
	&\le 0.
\end{align}
Now we choose the parameters
$\nu$
and
$t_0$
such that
\begin{align*}
	\frac{1}{4}-\nu-\frac{\alpha}{2}(t_0+t)^{\alpha-1}\ge c_0,\quad
	1-\frac{3\nu}{\delta}\ge c_0,\\
	\nu\delta_3-\frac{\alpha}{2}(t_0+t)^{\alpha-1}\ge c_0,\quad
	\frac{1}{5}\ge c_0
\end{align*}
hold for some constant
$c_0>0$.
This is possible because we first determine
$\nu$
sufficiently small depending on
$\delta$
and then we choose
$t_0$
sufficiently large depending on
$\nu$.
Consequently, we obtain
\begin{align}
\label{2mult10}
	\lefteqn{\frac{d}{dt}\left[
		\int_{\mathbf{R}^n}e^{2\psi}\left\{
			\frac{(t_0+t)^{\alpha}}{2}(u_t^2+|\nabla u|^2)
			+\nu\left(uu_t+\frac{a(x)}{2}u^2\right)
		\right\}dx
	\right]}\\
\nonumber
	&\quad+c_0\int_{\mathbf{R}^n}e^{2\psi}
		(1+(t_0+t)^{\alpha}(-\psi_t))(u_t^2+|\nabla u|^2)dx\\
\nonumber
	&\quad +\nu\int_{\mathbf{R}^n}e^{2\psi}\left(\delta_3|\nabla\psi|^2
		+\left(\frac{n-\alpha}{2-\alpha}-2\delta_1\right)\frac{a(x)}{2(1+t)}\right)u^2dx\\
\nonumber
	&\le 0.
\end{align}
We put
\begin{align*}
	\tilde{E}_1(t)&=\int_{\mathbf{R}^n}e^{2\psi}\left\{
			\frac{(t_0+t)^{\alpha}}{2}(u_t^2+|\nabla u|^2)
			+\nu\left(uu_t+\frac{a(x)}{2}u^2\right)
		\right\}dx,\\
	E_{1,\psi}(t)&=\int_{\mathbf{R}^n}e^{2\psi}
		(1+(t_0+t)^{\alpha}(-\psi_t))(u_t^2+|\nabla u|^2)dx\\
	\tilde{H}_1(t)&=\nu\int_{\mathbf{R}^n}e^{2\psi}\left(\delta_3|\nabla\psi|^2
		+\left(\frac{n-\alpha}{2-\alpha}-2\delta_1\right)\frac{a(x)}{2(1+t)}\right)u^2dx.
\end{align*}
Then we can rewrite \eqref{2mult10} as
\begin{equation}
\label{2energy1}
	\frac{d}{dt}\tilde{E}_1(t)+c_0E_{1,\psi}(t)+\tilde{H}_1(t)\le 0.
\end{equation}
Take arbitrary
$\varepsilon>0$
and we determine
$\delta$
so that
$\varepsilon=3\delta_1$.
By multiplying \eqref{2energy1} by
$(t_0+t)^{\frac{n-\alpha}{2-\alpha}-\varepsilon}$,
we have
\begin{align*}
	&\frac{d}{dt}[(t_0+t)^{\frac{n-\alpha}{2-\alpha}-\varepsilon}\tilde{E}_1(t)]
	-\left(\frac{n-\alpha}{2-\alpha}-\varepsilon\right)
		(t_0+t)^{\frac{n-\alpha}{2-\alpha}-1-\varepsilon}\tilde{E}_1(t)\\
	&\quad +c_0(t_0+t)^{\frac{n-\alpha}{2-\alpha}-\varepsilon}E_{1,\psi}(t)
	+(t_0+t)^{\frac{n-\alpha}{2-\alpha}-\varepsilon}\tilde{H}_1(t)\\
	&\le 0.
\end{align*}
Since
$$
	|\nu uu_t|\le
	\frac{\nu\delta_4}{2}a(x)u^2
	+\frac{\nu}{2\delta_4}(t_0+t)^{\alpha}u_t^2,
$$
we estimate
$$
	\tilde{E}_1(t)\le \int_{\mathbf{R}^n}e^{2\psi}
	\left( \left(1+\frac{\nu}{\delta_4}\right)\frac{(t_0+t)^{\alpha}}{2}u_t^2
		+\frac{(t_0+t)^{\alpha}}{2}|\nabla u|^2
		+\nu(1+\delta_4)\frac{a(x)}{2}u^2\right)dx.
$$
Choosing
$\delta_4$
sufficiently small
and then
$t_0$
sufficiently large so that
$$
	\left(\frac{n-\alpha}{2-\alpha}-2\delta_1\right)
	-\left(\frac{n-\alpha}{2-\alpha}-3\delta_1\right)(1+\delta_4)\ge c_1,
$$
$$
	c_0-\frac{1}{2}\left(1+\frac{\nu}{\delta_4}\right)(t_0+t)^{\alpha-1}\ge c_1
$$
with some
$c_1>0$,
we have
\begin{align*}
	&\frac{d}{dt}[(t_0+t)^{\frac{n-\alpha}{2-\alpha}-\varepsilon}\tilde{E}_1(t)]\\
	&\quad +c_1(t_0+t)^{\frac{n-\alpha}{2-\alpha}-\varepsilon}E_{1,\psi}(t)
	+(t_0+t)^{\frac{n-\alpha}{2-\alpha}-\varepsilon}H_1(t)\\
	&\le 0,
\end{align*}
where
$$
	H_1(t)=\nu\int_{\mathbf{R}^n}e^{2\psi}
	\left(\delta_3|\nabla \psi|^2+c_1\frac{a(x)}{2(1+t)}\right)u^2dx.
$$
Integrating over
$[0,t]$,
one can obtain
\begin{align*}
	(t_0+t)^{\frac{n-\alpha}{2-\alpha}-\varepsilon}\tilde{E}_1(t)
	+\int_0^t(t_0+\tau)^{\frac{n-\alpha}{2-\alpha}-\varepsilon}
		(c_1E_{1,\psi}(\tau)+H_1(\tau))d\tau
	\le \tilde{E}_1(0).
\end{align*}
We also put
$$
	E_1(t):=\int_{\mathbf{R}^n}e^{2\psi}
	\{ (t_0+t)^{\alpha}(u_t^2+|\nabla u|^2)+a(x)u^2\}dx.
$$
Then it is easy to see that
$\tilde{E}_1(t)\sim E_1(t)$
and
$E_1(0)\lesssim I_0$.
From this, we have
\begin{equation}
\label{2energy2}
	(t_0+t)^{\frac{n-\alpha}{2-\alpha}-\varepsilon}E_1(t)
	+\int_0^t(t_0+\tau)^{\frac{n-\alpha}{2-\alpha}-\varepsilon}
		(E_{1,\psi}(\tau)+H_1(\tau))d\tau
	\le CI_0.
\end{equation}

To reach the conclusion of the proposition,
we mutiply \eqref{2mult3} by
$(t_0+t)^{\frac{n-\alpha}{2-\alpha}+1-\varepsilon}$
and obtain
\begin{align*}
	\lefteqn{\frac{d}{dt}\left[(t_0+t)^{\frac{n-\alpha}{2-\alpha}+1-\varepsilon}
		\int_{\mathbf{R}^n}\frac{e^{2\psi}}{2}(u_t^2+|\nabla u|^2)dx\right] }\\
\nonumber
	&\quad-\left(\frac{n-\alpha}{2-\alpha}+1-\varepsilon\right)
		(t_0+t)^{\frac{n-\alpha}{2-\alpha}-\varepsilon}
		\int_{\mathbf{R}^n}\frac{e^{2\psi}}{2}(u_t^2+|\nabla u|^2)dx\\
\nonumber
	&\quad+(t_0+t)^{\frac{n-\alpha}{2-\alpha}+1-\varepsilon}
		\int_{\mathbf{R}^n}e^{2\psi}
		\left\{\left(\frac{1}{4}a(x)
		-\psi_t\right)u_t^2+\frac{-\psi_t}{5}|\nabla u|^2\right\}dx
	\le0.
\end{align*}
By integrating over
$[0,t]$, it holds that
\begin{align}
\label{2mult11}
	&(t_0+t)^{\frac{n-\alpha}{2-\alpha}+1-\varepsilon}
		\int_{\mathbf{R}^n}\frac{e^{2\psi}}{2}(u_t^2+|\nabla u|^2)dx\\
\nonumber
	&\quad-\left(\frac{n-\alpha}{2-\alpha}+1-\varepsilon\right)
		\int_0^t(t_0+\tau)^{\frac{n-\alpha}{2-\alpha}-\varepsilon}
		\int_{\mathbf{R}^n}\frac{e^{2\psi}}{2}(u_t^2+|\nabla u|^2)dxd\tau\\
\nonumber
	&\quad+\int_0^t(t_0+\tau)^{\frac{n-\alpha}{2-\alpha}+1-\varepsilon}
		\int_{\mathbf{R}^n}e^{2\psi}
		\left\{\left(\frac{1}{4}a(x)
		-\psi_t\right)u_t^2+\frac{-\psi_t}{5}|\nabla u|^2\right\}dxd \tau\\
\nonumber
	&\le CI_0.
\end{align}
Taking \eqref{2energy2}$+\eta$\eqref{2mult11}
with small parameter
$\eta>0$
satisfying
$$
	1-\frac{\eta}{2}\left(\frac{n-\alpha}{2-\alpha}+1-\varepsilon\right)>0,
$$
we can see that
\begin{align*}
	&(t_0+t)^{\frac{n-\alpha}{2-\alpha}+1-\varepsilon}
		\int_{\mathbf{R}^n}e^{2\psi(t,x)}(u_t(t,x)^2+|\nabla u(t,x)|^2)dx\\
	&\quad+(t_0+t)^{\frac{n-\alpha}{2-\alpha}-\varepsilon}
		\int_{\mathbf{R}^n}e^{2\psi(t,x)}a(x)u(t,x)^2dx\\
	&\quad+\int_0^t\Big\{
		(t_0+\tau)^{\frac{n-\alpha}{2-\alpha}-\varepsilon}
			\int_{\mathbf{R}^n}e^{2\psi(\tau,x)}(u_t^2+|\nabla u|^2)(\tau,x)dx\\
	&\qquad+(t_0+\tau)^{\frac{n-\alpha}{2-\alpha}+1-\varepsilon}
			\int_{\mathbf{R}^n}e^{2\psi(\tau,x)}(-\psi_t(\tau,x))(u_t^2+|\nabla u|^2)(\tau,x)dx\\
	&\qquad+(t_0+t)^{\frac{n-\alpha}{2-\alpha}-\varepsilon}
			\int_{\mathbf{R}^n}e^{2\psi(\tau,x)}|\nabla\psi(\tau,x)|^2u(\tau,x)^2dx\\
	&\qquad+(t_0+\tau)^{\frac{n-\alpha}{2-\alpha}-1-\varepsilon}
		\int_{\mathbf{R}^n}e^{2\psi(\tau,x)}a(x)u(\tau,x)^2dx\\
	&\qquad+(t_0+\tau)^{\frac{n-\alpha}{2-\alpha}+1-\varepsilon}
		\int_{\mathbf{R}^n}e^{2\psi(\tau,x)}a(x)u_t(\tau,x)^2dx\Big\}d\tau\\
	&\le CI_0.
\end{align*}
Finally, we note that
$(t_0+t)\sim (1+t)$
and obtain the conclusion.
\end{proof}

\section{Weighted energy estimates for higher order derivatives}
In this section, we give a proof of Lemmas \ref{lem2} and \ref{lem3}
and Theorem \ref{th2}.
We first prove \eqref{ut}.
Differentiating \eqref{DW} with respect to
$t$,
we obtain
\begin{equation}
\label{DWt}
	u_{ttt}-\Delta u_t+a(x)u_{tt}=0.
\end{equation}
We apply the weighted energy method again.
First, by Proposition \ref{propBE}, we have
\begin{equation}
\label{aut}
	\int_0^t(t_0+\tau)^{\frac{n-\alpha}{2-\alpha}+1-\varepsilon}
		\int_{\mathbf{R}^n}e^{2\psi(\tau,x)}a(x)u_t(\tau,x)^2dxd\tau
	\le CI_0.
\end{equation}
Multiplying \eqref{DWt}
by
$e^{2\psi}u_{tt}$
and
$e^{2\psi}u_t$,
and the same argument as the derivation of \eqref{2energy1},
we can obtain
\begin{equation}
\label{3energy1}
	\frac{d}{dt}\tilde{E}_2(t)+c_0E_{2,\psi}(t)+\tilde{H}_2(t)\le 0,
\end{equation}
where
\begin{align*}
	\tilde{E}_2(t)&=\int_{\mathbf{R}^n}e^{2\psi}\left\{
			\frac{(t_0+t)^{\alpha}}{2}(u_{tt}^2+|\nabla u_t|^2)
			+\nu\left(u_tu_{tt}+\frac{a(x)}{2}u_t^2\right)
		\right\}dx,\\
	E_{2,\psi}(t)&=\int_{\mathbf{R}^n}e^{2\psi}
		(1+(t_0+t)^{\alpha}(-\psi_t))(u_{tt}^2+|\nabla u_t|^2)dx,\\
	\tilde{H}_2(t)&=\nu\int_{\mathbf{R}^n}e^{2\psi}\left(\delta_3|\nabla\psi|^2
		+\left(\frac{n-\alpha}{2-\alpha}-2\delta_1\right)\frac{a(x)}{2(1+t)}\right)u_t^2dx.
\end{align*}
Multiplying \eqref{3energy1} by
$(t_0+t)^{\frac{n-\alpha}{2-\alpha}+2-\varepsilon}$
and retaking
$t_0$
larger,
we have
\begin{align*}
	&\frac{d}{dt}[(t_0+t)^{\frac{n-\alpha}{2-\alpha}+2-\varepsilon}\tilde{E}_2(t)]\\
	&\quad+c_1(t_0+t)^{\frac{n-\alpha}{2-\alpha}+2-\varepsilon}E_{2,\psi}(t)\\
	&\quad+\nu\delta_3(t_0+t)^{\frac{n-\alpha}{2-\alpha}+2-\varepsilon}
		\int_{\mathbf{R}^n}e^{2\psi}|\nabla \psi|^2u_t^2dx\\
	&\le C(t_0+t)^{\frac{n-\alpha}{2-\alpha}+1-\varepsilon}
		\int_{\mathbf{R}^n}e^{2\psi}a(x)u_t^2dx
\end{align*}
with some
$c_1>0$.
By \eqref{aut}, integrating over
$[0,t]$
and noting
$\tilde{E}_2(t)\sim E_2(t)$,
where
$$
	E_2(t)=\int_{\mathbf{R}^n}e^{2\psi}\left\{
			(t_0+t)^{\alpha}(u_{tt}^2+|\nabla u_t|^2)
			+a(x)u_t^2
		\right\}dx,
$$
it follows that
\begin{align}
\label{3energy2}
	&(t_0+t)^{\frac{n-\alpha}{2-\alpha}+2-\varepsilon}E_2(t)\\
\nonumber
	&\quad+\int_0^t
		(t_0+\tau)^{\frac{n-\alpha}{2-\alpha}+2-\varepsilon}E_{2,\psi}(\tau)d\tau\\
\nonumber
	&\quad+\int_0^t
		(t_0+\tau)^{\frac{n-\alpha}{2-\alpha}+2-\varepsilon}
		\int_{\mathbf{R}^n}e^{2\psi}|\nabla \psi|^2u_t^2dxd\tau\\
\nonumber
	&\le CI_1.
\end{align}
In particular, we can see that \eqref{ut} holds.
Furthermore, we obtain
\begin{equation}
\label{intutt}
	\int_0^t(t_0+\tau)^{\frac{n-\alpha}{2-\alpha}+2-\varepsilon}
		\int_{\mathbf{R}^n}e^{2\psi}(u_{tt}^2+|\nabla u_t|^2)dxd\tau
	\le CI_1.
\end{equation}
Using this, we can prove \eqref{nablaut}.
Indeed, by the same argument as proving \eqref{2mult3},
we have
\begin{align}
\label{3mult1}
	\lefteqn{\frac{d}{dt}\int_{\mathbf{R}^n}
		\frac{e^{2\psi}}{2}(u_{tt}^2+|\nabla u_t|^2)dx}\\
\nonumber
	&\quad+\int_{\mathbf{R}^n}e^{2\psi}
		\left\{\left(\frac{1}{4}a(x)
		-\psi_t\right)u_{tt}^2+\frac{-\psi_t}{5}|\nabla u_t|^2\right\}dx
	\le0.
\end{align}
Multiplying \eqref{3mult1} by
$(t_0+t)^{\frac{n-\alpha}{2-\alpha}+3-\varepsilon}$,
we obtain
\begin{align*}
	\lefteqn{\frac{d}{dt}\left[
		(t_0+t)^{\frac{n-\alpha}{2-\alpha}+3-\varepsilon}
		\int_{\mathbf{R}^n}
		\frac{e^{2\psi}}{2}(u_{tt}^2+|\nabla u_t|^2)dx
		\right]}\\
	&\quad+(t_0+t)^{\frac{n-\alpha}{2-\alpha}+3-\varepsilon}
	\int_{\mathbf{R}^n}e^{2\psi}
		\left\{ a(x)u_{tt}^2+(-\psi_t)(u_{tt}^2+|\nabla u_t|^2)
		\right\}dx\\
	&\le C(t_0+t)^{\frac{n-\alpha}{2-\alpha}+2-\varepsilon}
		\int_{\mathbf{R}^n}
		\frac{e^{2\psi}}{2}(u_{tt}^2+|\nabla u_t|^2)dx.
\end{align*}
Integration over the interval
$[0, t]$
and the estimate \eqref{intutt} imply
\begin{align}
\label{3energy3}
	\lefteqn{
		(t_0+t)^{\frac{n-\alpha}{2-\alpha}+3-\varepsilon}
		\int_{\mathbf{R}^n}
		e^{2\psi}(u_{tt}^2+|\nabla u_t|^2)dx}\\
\nonumber
	&\quad+\int_0^t(t_0+\tau)^{\frac{n-\alpha}{2-\alpha}+3-\varepsilon}
	\int_{\mathbf{R}^n}e^{2\psi}
		\left\{ a(x)u_{tt}^2+(-\psi_t)(u_{tt}^2+|\nabla u_t|^2)
		\right\}dxd\tau\\
\nonumber
	&\le CI_1.
\end{align}
In particular, we have \eqref{nablaut} and
\begin{equation}
\label{intutt2}
	\int_0^t(t_0+\tau)^{\frac{n-\alpha}{2-\alpha}+3-\varepsilon}
		\int_{\mathbf{R}^n}e^{2\psi}a(x)u_{tt}(\tau,x)^2dxd\tau
	\le CI_1,
\end{equation}
which will be used to obtain \eqref{utt} and \eqref{uttt}.

To prove \eqref{utt} and \eqref{uttt},
we differentiate \eqref{DWt} again and have
\begin{equation}
\label{DWtt}
	u_{tttt}-\Delta u_{tt}+a(x)u_{ttt}=0.
\end{equation}
Using \eqref{intutt2} instead of \eqref{aut} and
by the same argument as above,
we can prove instead of \eqref{3energy2} that
\begin{align}
\label{3energy4}
	&(t_0+t)^{\frac{n-\alpha}{2-\alpha}+4-\varepsilon}E_3(t)\\
\nonumber
	&\quad+\int_0^t
		(t_0+\tau)^{\frac{n-\alpha}{2-\alpha}+4-\varepsilon}E_{3,\psi}(\tau)d\tau\\
\nonumber
	&\quad+\int_0^t
		(t_0+t)^{\frac{n-\alpha}{2-\alpha}+4-\varepsilon}
		\int_{\mathbf{R}^n}e^{2\psi}|\nabla \psi|^2u_{tt}^2dxd\tau\\
\nonumber
	&\le CI_2,
\end{align}
where
\begin{align*}
	E_3(t)&=\int_{\mathbf{R}^n}e^{2\psi}\left\{
			(t_0+t)^{\alpha}(u_{ttt}^2+|\nabla u_{tt}|^2)
			+a(x)u_{tt}^2
		\right\}dx,\\
	E_{3,\psi}(t)&=\int_{\mathbf{R}^n}e^{2\psi}
		(1+(t_0+t)^{\alpha}(-\psi_t))(u_{ttt}^2+|\nabla u_{tt}|^2)dx.
\end{align*}
In particular, we obtain \eqref{utt}.
Moreover, by the same argument as deriving \eqref{3energy3},
one can obtain
\begin{align}
\label{3energy5}
	\lefteqn{
		(t_0+t)^{\frac{n-\alpha}{2-\alpha}+5-\varepsilon}
		\int_{\mathbf{R}^n}
		e^{2\psi}(u_{ttt}^2+|\nabla u_{tt}|^2)dx}\\
\nonumber
	&+\int_0^t(t_0+\tau)^{\frac{n-\alpha}{2-\alpha}+5-\varepsilon}
	\int_{\mathbf{R}^n}e^{2\psi}
		\left\{ a(x)u_{ttt}^2+(-\psi_t)(u_{ttt}^2+|\nabla u_{tt}|^2)
		\right\}dxd\tau\\
\nonumber
	&\le CI_3.
\end{align}
In particular, we have
\begin{equation}
\label{intuttt}
	\int_0^t(t_0+\tau)^{\frac{n-\alpha}{2-\alpha}+5-\varepsilon}
	\int_{\mathbf{R}^n}e^{2\psi}a(x)u_{ttt}^2dxd\tau\le CI_3.
\end{equation}
Using \eqref{intuttt} instead of \eqref{intutt2} again,
we can prove \eqref{uttt}.
Furthermore, we can continue the argument starting at \eqref{intutt2}
and obtaining \eqref{intuttt} as much as we want.
Therefore, we can obtain the conclusion of Theorem \ref{th2}.

Finally, we prove \eqref{v} and \eqref{vt}.
Multiplying \eqref{H} by
$v$,
we have
$$
	\frac{\partial}{\partial t}\left[ \frac{a(x)}{2}v^2 \right]
	-\nabla\cdot(v\nabla v)+|\nabla v|^2=0.
$$
Integrating over
$\mathbf{R}^n$,
one can obtain
$$
	\frac{1}{2}\frac{d}{dt}\int_{\mathbf{R}^n}a(x)v(t,x)^2dx
	+\int_{\mathbf{R}^n}|\nabla v(t,x)|^2dx=0.
$$
Thus, we have
$$
	\frac{1}{2}\int_{\mathbf{R}^n}a(x)v(t,x)^2dx
	+\int_{\tau}^t\int_{\mathbf{R}^n}|\nabla v(s,x)|^2dxds
	=\frac{1}{2}\int_{\mathbf{R}^n}a(x)v_{\tau}(x)^2dx,
$$
which implies \eqref{v}.
To prove \eqref{vt}, we apply a similar argument to \eqref{H} as in the previous section.
We first multiply \eqref{H} by
$e^{2\psi}v_t$
and have
\begin{align*}
	&\frac{\partial}{\partial t}\left[ \frac{e^{2\psi}}{2}|\nabla v|^2\right]
	-\nabla\cdot(e^{2\psi}v_t\nabla v)\\
	&\quad +e^{2\psi}\left(a(x)-\frac{|\nabla\psi|^2}{-\psi_t}\right)v_t^2
		+\underbrace{\frac{e^{2\psi}}{-\psi_t}|\psi_t\nabla v-v_t\nabla\psi|^2}_{T_1}=0.
\end{align*}
By \eqref{psita} and
$$
	T_1\ge e^{2\psi}\left(\frac{1}{5}(-\psi_t)|\nabla v|^2-\frac{a(x)}{4(2+\delta)}v_t^2\right),
$$
it follows that
\begin{align*}
	&\frac{\partial}{\partial t}\left[ \frac{e^{2\psi}}{2}|\nabla v|^2\right]
	-\nabla\cdot(e^{2\psi}v_t\nabla v)\\
\nonumber
	&\quad+e^{2\psi}\left(\frac{1}{4}a(x)v_t^2+\frac{1}{5}(-\psi_t)|\nabla v|^2\right)
	\le 0.
\end{align*}
Integrating over
$\mathbf{R}^n$,
we obtain
\begin{equation}
\label{3v1}
	\frac{d}{dt}\int_{\mathbf{R}^n}\frac{e^{2\psi}}{2}|\nabla v|^2dx
	+\int_{\mathbf{R}^n}
		e^{2\psi}\left(\frac{1}{4}a(x)v_t^2+\frac{1}{5}(-\psi_t)|\nabla v|^2\right)dx
	\le 0.
\end{equation}

On the other hand, by multiplying \eqref{H} by
$e^{2\psi}v$,
it follows that
\begin{align*}
	&\frac{\partial}{\partial t}\left[ e^{2\psi}\frac{a(x)}{2}v^2\right]
	-\nabla\cdot(e^{2\psi}v\nabla v)\\
	&\quad +e^{2\psi}\{ |\nabla v|^2+2v\nabla\psi\cdot\nabla v+(-\psi_t)a(x)v^2\}
	=0.
\end{align*}
By the same argument as the derivation of \eqref{2mult6},
we can see that
\begin{align}
\label{3v2}
	&\frac{d}{dt}\left[ \int_{\mathbf{R}^n}e^{2\psi}\frac{a(x)}{2}v^2dx\right]\\
\nonumber
	&\quad +\int_{\mathbf{R}^n}e^{2\psi}
		\left(\delta_3(|\nabla v|^2+|\nabla\psi|^2v^2)+\frac{\delta}{3}(-\psi_t)a(x)v^2
		+\left(\frac{n-\alpha}{2-\alpha}-2\delta_1\right)\frac{a(x)}{2(1+t)}v^2\right)dx\\
\nonumber
	&\le 0.
\end{align}
Taking  $\nu(1+t)^{\frac{n-\alpha}{2-\alpha}+1-\varepsilon}$
\eqref{3v1}$+(1+t)^{\frac{n-\alpha}{2-\alpha}-\varepsilon}$\eqref{3v2}
with small parameter
$\nu>0$,
we obtain
\begin{align*}
	&\frac{d}{dt}\left[
		\nu (1+t)^{\frac{n-\alpha}{2-\alpha}+1-\varepsilon}
		\int_{\mathbf{R}^n}\frac{e^{2\psi}}{2}|\nabla v|^2dx
		+(1+t)^{\frac{n-\alpha}{2-\alpha}-\varepsilon}
		\int_{\mathbf{R}^n}e^{2\psi}\frac{a(x)}{2}v^2dx\right]\\
	&\quad+\left(\delta_3-\frac{\nu}{2}\left(\frac{n-\alpha}{2-\alpha}+1-\varepsilon\right)\right)
		(1+t)^{\frac{n-\alpha}{2-\alpha}-\varepsilon}
			\int_{\mathbf{R}^n}e^{2\psi}|\nabla v|^2dx\\
	&\quad+\nu(1+t)^{\frac{n-\alpha}{2-\alpha}+1-\varepsilon}
		\int_{\mathbf{R}^n}e^{2\psi}
			\left(\frac{1}{4}a(x)v_t^2+\frac{(-\psi_t)}{5}|\nabla v|^2\right)dx\\
	&\quad+(1+t)^{\frac{n-\alpha}{2-\alpha}-\varepsilon}
		\int_{\mathbf{R}^n}e^{2\psi}
			\left( \delta_3|\nabla\psi|^2v^2+\frac{\delta}{3}(-\psi_t)a(x)v^2\right)dx\\
	&\quad+(\varepsilon-2\delta_1)(1+t)^{\frac{n-\alpha}{2-\alpha}-1-\varepsilon}
		\int_{\mathbf{R}^n}e^{2\psi}
			\frac{a(x)}{2}v^2dx\\
	&\le 0.
\end{align*}
We determine
$\delta$
so that
$\varepsilon=3\delta_1$
holds.
Then we take
$\nu$
sufficiently small.
Integrating the above inequality over
$[\tau, t]$,
we have
\begin{align}
	\label{3v3}
	&(1+t)^{\frac{n-\alpha}{2-\alpha}+1-\varepsilon}
		\int_{\mathbf{R}^n}\frac{e^{2\psi}}{2}|\nabla v|^2dx
		+(1+t)^{\frac{n-\alpha}{2-\alpha}-\varepsilon}
		\int_{\mathbf{R}^n}e^{2\psi}\frac{a(x)}{2}v^2dx\\
\nonumber
	&\quad+\int_{\tau}^t(1+s)^{\frac{n-\alpha}{2-\alpha}-\varepsilon}
			\int_{\mathbf{R}^n}e^{2\psi}|\nabla v|^2dxds\\
\nonumber
	&\quad+\int_{\tau}^t(1+s)^{\frac{n-\alpha}{2-\alpha}+1-\varepsilon}
		\int_{\mathbf{R}^n}e^{2\psi}(a(x)v_t^2+(-\psi_t)|\nabla v|^2)dxds\\
\nonumber
	&\quad+\int_{\tau}^t(1+s)^{\frac{n-\alpha}{2-\alpha}-\varepsilon}
		\int_{\mathbf{R}^n}e^{2\psi}
			\left( |\nabla\psi|^2v^2+(-\psi_t)a(x)v^2\right)dxds\\
\nonumber
	&\quad+\int_{\tau}^t(1+s)^{\frac{n-\alpha}{2-\alpha}-1-\varepsilon}
		\int_{\mathbf{R}^n}e^{2\psi}a(x)v^2dxds\\
\nonumber
	&\le C(1+\tau)^{\frac{n-\alpha}{2-\alpha}+1-\varepsilon}
		\int_{\mathbf{R}^n}e^{2\psi(\tau,x)}|\nabla v_{\tau}(x)|^2dx\\
\nonumber
	&\quad+C(1+\tau)^{\frac{n-\alpha}{2-\alpha}-\varepsilon}
		\int_{\mathbf{R}^n}e^{2\psi(\tau,x)}a(x)v_{\tau}(x)^2dx.
\end{align}
In particular, it follows that
\begin{align}
\label{3v4}
	&\int_{\tau}^t(1+s)^{\frac{n-\alpha}{2-\alpha}+1-\varepsilon}
		\int_{\mathbf{R}^n}e^{2\psi(s,x)}a(x)v_t(s,x)^2dxds\\
\nonumber
	&\quad\le C(1+\tau)^{\frac{n-\alpha}{2-\alpha}+1-\varepsilon}
		\int_{\mathbf{R}^n}e^{2\psi(\tau,x)}|\nabla v_{\tau}(x)|^2dx\\
\nonumber
	&\quad+C(1+\tau)^{\frac{n-\alpha}{2-\alpha}-\varepsilon}
		\int_{\mathbf{R}^n}e^{2\psi(\tau,x)}a(x)v_{\tau}(x)^2dx.
\end{align}
Using this estimate, we prove \eqref{vt}.
We differentiate \eqref{H} with respect to
$t$
and have
$$
	a(x)v_{tt}-\Delta v_t=0.
$$
Multiplying this by
$v_t$,
we obtain
$$
	\frac{\partial}{\partial t}\left[ \frac{a(x)}{2}v_t^2 \right]
	-\nabla\cdot(v_t\nabla v_t)+|\nabla v_t|^2=0.
$$
Integrating over
$\mathbf{R}^n$,
one can see that
$$
	\frac{d}{dt}\int_{\mathbf{R}^n}a(x)v_t(t,x)^2dx\le 0.
$$
Moreover, taking account into \eqref{3v4}, multiplying this by
$(1+t-\tau)^{\frac{n-\alpha}{2-\alpha}+2-\varepsilon}$
with
$0\le \tau\le t$,
we have
\begin{align*}
	\frac{d}{dt}\left[
		(1+t-\tau)^{\frac{n-\alpha}{2-\alpha}+2-\varepsilon}
		\int_{\mathbf{R}^n}a(x)v_t(t,x)^2dx\right]
	&\le C(1+t-\tau)^{\frac{n-\alpha}{2-\alpha}+1-\varepsilon}
		\int_{\mathbf{R}^n}a(x)v_t(t,x)^2dx\\
	&\le C(1+t)^{\frac{n-\alpha}{2-\alpha}+1-\varepsilon}
		\int_{\mathbf{R}^n}a(x)v_t(t,x)^2dx.
\end{align*}
Integrating over
$[\tau, t]$,
and using \eqref{3v4},
we can conclude that
\begin{align*}
	&(1+t-\tau)^{\frac{n-\alpha}{2-\alpha}+2-\varepsilon}
		\int_{\mathbf{R}^n}a(x)v_t(t,x)^2dx\\
	&\le \int_{\mathbf{R}^n}a(x)v_t(\tau,x)^2dx\\
	&\quad +C\int_{\tau}^t(1+s)^{\frac{n-\alpha}{2-\alpha}+1-\varepsilon}
		\int_{\mathbf{R}^n}a(x)v_t(s,x)^2dxds\\
	&\le \int_{\mathbf{R}^n}a(x)^{-1}|\Delta v_\tau(x)|^2dx\\
	&\quad+ C(1+\tau)^{\frac{n-\alpha}{2-\alpha}+1-\varepsilon}
		\int_{\mathbf{R}^n}e^{2\psi(\tau,x)}|\nabla v_{\tau}(x)|^2dx\\
	&\quad+C(1+\tau)^{\frac{n-\alpha}{2-\alpha}-\varepsilon}
		\int_{\mathbf{R}^n}e^{2\psi(\tau,x)}a(x)v_{\tau}(x)^2dx.
\end{align*}
Here we note that
$$
	a(x)v_t(\tau,x)=\Delta v_{\tau}(x),
$$
since
$v$
satisfies \eqref{H}.
Thus, we obtain \eqref{vt}.

\begin{proof}[Proof of Lemma 1.4]
By \eqref{3v2}, we have
$$
	\frac{d}{dt}\left[\int_{\mathbf{R}^n}e^{2\psi}\frac{a(x)}{2}v^2dx\right]\le 0.
$$
This shows
\begin{equation}
\label{3v5}
	\int_{\mathbf{R}^n}e^{2\psi(t,x)}\frac{a(x)}{2}v(t,x)^2dx
	\le\int_{\mathbf{R}^n}e^{2\psi(0,x)}\frac{a(x)}{2}v(0,x)^2dx.
\end{equation}
Let
$0<\rho<1-\alpha$,
$0<\mu<2A$
and
$$
	\Omega_{\rho}(t):=
	\{ x\in\mathbf{R}^n \mid \langle x\rangle^{2-\alpha}\ge (1+t)^{1+\rho} \}.
$$
A simple calculation implies
$$
	e^{2\psi}a(x)\ge c(1+t)^{-\frac{\alpha}{2-\alpha}}
		e^{(2A-\mu)\frac{\langle x\rangle^{2-\alpha}}{1+t}}.
$$
By noting that
$$
	\frac{\langle x\rangle^{2-\alpha}}{1+t}\ge (1+t)^{\rho}
$$
on
$\Omega_{\rho}(t)$
and \eqref{3v5}, it follows that
\begin{align*}
	\lefteqn{(1+t)^{-\frac{\alpha}{2-\alpha}}\int_{\Omega_{\rho}(t)}
		e^{(2A-\mu)(1+t)^{\rho}}v(t,x)^2dx}\\
	&\le C(1+t)^{-\frac{\alpha}{2-\alpha}}\int_{\Omega_{\rho}(t)}
		e^{(2A-\mu)\frac{\langle x\rangle^{2-\alpha}}{1+t}}v(t,x)^2dx\\
	&\le C\int_{\mathbf{R}^n}e^{2\psi(t,x)}a(x)v(t,x)^2dx\\
	&\le C\int_{\mathbf{R}^n}e^{2\psi(0,x)}a(x)v(0,x)^2dx.
\end{align*}
Thus, we obtain
$$
	\int_{\Omega_{\rho}(t)}v(t,x)^2dx\le
	C(1+t)^{\frac{\alpha}{2-\alpha}}
		e^{-(2A-\mu)(1+t)^{\rho}}\int_{\mathbf{R}^n}e^{2\psi(0,x)}a(x)v(0,x)^2dx.
$$
This proves Lemma \ref{lem3}.

\end{proof}

\section{Proof of the main theorem}
We can write
\begin{equation}
\label{Duhamel}
	u(t,x)=E(t)u_0(x)-\int_0^tE(t-\tau)[a(\cdot)^{-1}u_{tt}(\tau,\cdot)](x)d\tau.
\end{equation}
By the integration by parts, we have
\begin{align*}
	-\int_0^tE(t-\tau)[a(\cdot)^{-1}u_{tt}(\tau,\cdot)]d\tau
	&=-\int_{t/2}^tE(t-\tau)[a(\cdot)^{-1}u_{tt}(\tau,\cdot)]d\tau\\
	&\quad-E(t/2)[a^{-1}u_{t}(t/2)]+E(t)[a^{-1}u_1]\\
	&\quad -\int_0^{t/2}\frac{\partial E}{\partial t}(t-\tau)[a^{-1}u_t(\tau)]d\tau,
\end{align*}
where
$\frac{\partial E}{\partial t}(t-\tau)$
is the pseudodifferential operator with the symbol
$\frac{\partial e}{\partial t}(t-\tau,x,\xi)$,
that is,
$\frac{\partial E}{\partial t}(t-\tau)v_{\tau}$
denotes the derivative of
$E(t-\tau)v_{\tau}$
with respect to
$t$.
Therefore, we obtain
\begin{align}
\label{u-E}
	u(t)-E(t)[u_0+a^{-1}u_1]&=
		-\int_{t/2}^tE(t-\tau)[a(\cdot)^{-1}u_{tt}(\tau,\cdot)]d\tau\\
\nonumber
		&\quad-E(t/2)[a^{-1}u_{t}(t/2)]\\
\nonumber
		&\quad-\int_0^{t/2}\frac{\partial E}{\partial t}(t-\tau)[a^{-1}u_t(\tau)]d\tau
\end{align}
and it suffices to prove that
the each term of the right hand side are
$o(t^{-\frac{n-2\alpha}{2(2-\alpha)}})$
in the
$L^2$-sense.
First, by the finite propagation speed property,
we have
$u(t,x)=\chi(t,x)u(t,x)$
with the characteristic function
$\chi(t,x)$
of the region
$\{ (t,x)\in (0,\infty)\times\mathbf{R}^n \mid |x|<t+L \}$.
Moreover, by Lemma \ref{lem3},
the $L^2$-norm of
$(1-\chi(t,x))E(t)[u_0+a^{-1}u_1]$
decays exponentially.
Thus, by multiplying \eqref{u-E} by
$\chi(t,x)$,
it suffices to estimate the terms
$$
	K_1:=\chi(t,x)\int_{t/2}^tE(t-\tau)[a(\cdot)^{-1}u_{tt}(\tau,\cdot)]d\tau,
	\quad K_2:=\chi(t,x)E(t/2)[a^{-1}u_{t}(t/2)]
$$
and
$$
	K_3:=\chi(t,x)\int_0^{t/2}\frac{\partial E}{\partial t}(t-\tau)[a^{-1}u_t(\tau)]d\tau.
$$

We first estimate
$K_1$.
By \eqref{v} and \eqref{utt}, we have
\begin{align*}
	\|K_1\|_{L^2}&=
		\left\|\chi(t)\int_{t/2}^tE(t-\tau)[a^{-1}u_{tt}(\tau)]d\tau\right\|_{L^2}\\
	&=\left\|\chi(t)\frac{1}{\sqrt{a}}
		\int_{t/2}^t\sqrt{a}E(t-\tau)[a^{-1}u_{tt}(\tau)]d\tau\right\|_{L^2}\\
	&\lesssim (1+t)^{\alpha/2}\int_{t/2}^t
		\|\sqrt{a}E(t-\tau)[a^{-1}u_{tt}(\tau)]\|_{L^2}d\tau\\
	&\le (1+t)^{\alpha/2}\int_{t/2}^t
		\|\sqrt{a}a^{-1}u_{tt}(\tau)\|_{L^2}d\tau\\
	&\lesssim (1+t)^{3\alpha/2}\int_{t/2}^t
		\|\sqrt{a}u_{tt}(\tau)\|_{L^2}d\tau\\
	&\lesssim (1+t)^{3\alpha/2-\frac{n-\alpha}{2(2-\alpha)}-1-\varepsilon/2}
	=o(t^{-\frac{n-2\alpha}{2(2-\alpha)}}),
\end{align*}
provided that
$\varepsilon>0$
is taken sufficiently small.
Because, it is true that
\begin{equation}
\label{exponent}
	\frac{3}{2}\alpha-\frac{n-\alpha}{2(2-\alpha)}-1<-\frac{n-2\alpha}{2(2-\alpha)}
\end{equation}
holds if
$0\le\alpha<1$.

We can estimate
$K_2$
by a similar way.
Using \eqref{v}, \eqref{ut} and \eqref{exponent}, we obtain
\begin{align*}
	\|K_2\|_{L^2}&=
		\left\|\chi(t)\frac{1}{\sqrt{a}}\sqrt{a}E(t/2)[a^{-1}u_t(t/2)]\right\|_{L^2}\\
	&\lesssim (1+t)^{\alpha/2}\left\| \sqrt{a}E(t/2)[a^{-1}u_t(t/2)]\right\|_{L^2}\\
	&\lesssim (1+t)^{\alpha/2}\left\| \sqrt{a}a^{-1}u_t(t/2)\right\|_{L^2}\\
	&\lesssim (1+t)^{3\alpha/2}\left\| \sqrt{a}u_t(t/2)\right\|_{L^2}\\
	&\lesssim (1+t)^{3\alpha/2-\frac{n-\alpha}{2(2-\alpha)}-1+\varepsilon/2}
	=o(t^{-\frac{n-2\alpha}{2(2-\alpha)}}).
\end{align*}

Finally, we estimate
$K_3$.
By \eqref{vt}, we have
\begin{align*}
	\|K_3\|_{L^2}&=
		\left\|\chi(t)\int_0^{t/2}\frac{\partial E}{\partial t}(t-\tau)
			[a^{-1}u_t(\tau)]d\tau\right\|_{L^2}\\
	&=\left\|\chi(t)\frac{1}{\sqrt{a}}\int_0^{t/2}\sqrt{a}\frac{\partial E}{\partial t}(t-\tau)
			[a^{-1}u_t(\tau)]d\tau\right\|_{L^2}\\
	&\lesssim (1+t)^{\alpha/2}\int_0^{t/2}
		\left\| \sqrt{a}\frac{\partial E}{\partial t}(t-\tau)[a^{-1}u_t(\tau)]\right\|_{L^2}d\tau\\
	&\lesssim (1+t)^{\alpha/2}\int_0^{t/2}
		(1+t-\tau)^{-\frac{n-\alpha}{2(2-\alpha)}-1+\varepsilon/2}
		(J_1+J_2+J_3)d\tau\\
	&\lesssim (1+t)^{\alpha/2-\frac{n-\alpha}{2(2-\alpha)}-1+\varepsilon/2}
		\int_0^{t/2}(J_1+J_2+J_3)d\tau,
\end{align*}
where
\begin{align*}
	J_1^2&=\int_{\mathbf{R}^n}a(x)^{-1}
		|\Delta (a(x)^{-1}u_t(\tau,x))|^2dx,\\
	J_2^2&=(1+\tau)^{\frac{n-\alpha}{2-\alpha}+1-\varepsilon}
		\int_{\mathbf{R}^n}e^{2\psi(\tau,x)}|\nabla (a(x)^{-1}u_t(\tau,x))|^2dx
\end{align*}
and
$$
	J_3^2=(1+\tau)^{\frac{n-\alpha}{2-\alpha}-\varepsilon}
		\int_{\mathbf{R}^n}e^{2\psi(\tau,x)}a(x)|a(x)^{-1}u_t(\tau,x)|^2dx.
$$
By noting
\begin{equation}
\label{exponent2}
	\alpha-\frac{n-\alpha}{2(2-\alpha)}-\frac{1}{2}<-\frac{n-2\alpha}{2(2-\alpha)}
\end{equation}
if
$0\le \alpha<1$,
it is only necessary to prove
\begin{equation}
\label{jk}
	\int_0^{t/2}J_kd\tau\le C(1+t)^{\frac{\alpha+1}{2}}
\end{equation}
for
$k=1,2,3$
with some constant
$C>0$.

Now we prove \eqref{jk}.
We first estimate
$J_3$.
By \eqref{ut} and the finite propagation speed property again,
we can estimate
\begin{align*}
	J_3^2&\lesssim (1+\tau)^{\frac{n-\alpha}{2-\alpha}-\varepsilon}(1+\tau)^{2\alpha}
		\int_{\mathbf{R}^n}e^{2\psi(\tau,x)}a(x)u_t(\tau,x)^2dx\\
	&\lesssim (1+\tau)^{2\alpha-2}.
\end{align*}
By a simple calculation, we can see that
$$
	\int_0^{t/2}J_3d\tau\lesssim
	\int_0^{t/2}(1+\tau)^{\alpha-1}d\tau
	\lesssim (1+t)^{\frac{\alpha+1}{2}}.
$$
Next, we esitmate
$J_2$.
Noting
$$
	\nabla(a^{-1}u_t)=\nabla (a^{-1})u_t+a^{-1}\nabla u_t
$$
and
$|\nabla(a^{-1})|\lesssim \langle x\rangle^{\alpha-1}$,
we have
$$
	J_2^2\lesssim (1+\tau)^{\frac{n-\alpha}{2-\alpha}+1-\varepsilon}
		\int_{\mathbf{R}^n}e^{2\psi(\tau,x)}
		(|\langle x\rangle^{\alpha-1}u_t(\tau,x)|^2
		+|\langle x\rangle^{\alpha}\nabla u_t(\tau,x)|^2)dx.
$$
By \eqref{ut} and \eqref{utt}, we obtain
\begin{align*}
	\int_{\mathbf{R}^n}e^{2\psi(\tau,x)}|\langle x\rangle^{\alpha-1}u_t(\tau,x)|^2dx
	&\lesssim (1+\tau)^{\alpha}
		\int_{\mathbf{R}^n}e^{2\psi(\tau,x)}a(x)u_t(\tau,x)^2dx\\
		&\lesssim (1+\tau)^{\alpha-\frac{n-\alpha}{2-\alpha}-2+\varepsilon}
\end{align*}
and
\begin{align*}
	\int_{\mathbf{R}^n}e^{2\psi(\tau,x)}|\langle x\rangle^{\alpha}\nabla u_t(\tau,x)|^2dx
	&\lesssim (1+\tau)^{2\alpha}
	\int_{\mathbf{R}^n}e^{2\psi(\tau,x)}|\nabla u_t(\tau,x)|^2dx\\
	&\lesssim (1+\tau)^{2\alpha-\frac{n-\alpha}{2-\alpha}-3+\varepsilon}.
\end{align*}
Therefore, it holds that
$$
	\int_0^{t/2}J_2d\tau\lesssim (1+t)^{\frac{\alpha+1}{2}}.
$$

Finally, we estimate
$J_1$.
Noting
$$
	\Delta (a^{-1}u_t)=\Delta(a^{-1})u_t+2\nabla(a^{-1})\cdot\nabla u_t+a^{-1}\Delta u_t,
$$
we further divide
$J_1$
into three parts:
\begin{align*}
	J_1^2&\lesssim \int_{\mathbf{R}^n}a^{-1}|\Delta(a^{-1})u_t|^2dx
		+\int_{\mathbf{R}^n}a^{-1}|\nabla(a^{-1})|^2|\nabla u_t|^2dx
		+\int_{\mathbf{R}^n}a^{-1}|a^{-1}\Delta u_t|^2dx\\
	&\equiv J_{11}^2+J_{12}^2+J_{13}^2.
\end{align*}
By noting
$|\Delta (a^{-1})|\lesssim \langle x\rangle^{\alpha-2}$
and \eqref{ut}, we have
\begin{align*}
	J_{11}^2&\lesssim \int_{\mathbf{R}^n}\langle x\rangle^{4\alpha-4}
		a(x)u_t(\tau,x)^2dx\\
	&\lesssim (1+\tau)^{-\frac{n-\alpha}{2-\alpha}-2+\varepsilon}.
\end{align*}
Therefore, we immediately obtain
$$
	\int_0^{t/2}J_{11}d\tau\lesssim 1,
$$
provided that
$\varepsilon$
is sufficiently small.

Next, we estimate
$J_{12}$.
$$
	J_{12}^2\lesssim
		(1+\tau)^{\alpha}\int_{\mathbf{R}^n}|\nabla u_t(\tau,x)|^2dx
	\lesssim (1+\tau)^{\alpha-\frac{n-\alpha}{2-\alpha}-3+\varepsilon}
$$
and hence
$$
	\int_0^{t/2}J_{12}d\tau\lesssim 1.
$$
Since
$u_t$
also satisfies \eqref{DW}, we can rewrite
$$
	\Delta u_t=u_{ttt}-au_{tt}.
$$
Therefore, we have
\begin{align*}
	J_{13}^2&\lesssim \int_{\mathbf{R}^n}a(x)^{-4}a(x)|u_{ttt}(\tau,x)|^2dx
		+\int_{\mathbf{R}^n}a(x)^{-2}a(x)|u_{tt}(\tau,x)|^2dx\\
	&\lesssim (1+\tau)^{4\alpha-\frac{n-\alpha}{2-\alpha}-6+\varepsilon}
		+(1+\tau)^{2\alpha-\frac{n-\alpha}{2-\alpha}-4+\varepsilon}.
\end{align*}
These estimates imply
$$
	\int_0^{t/2}J_1d\tau\lesssim 1.
$$
This completes the proof.

\section*{Acknowledgement}
The author is deeply grateful to Professor Tatsuo Nishitani
for giving him constructive comments and warm
encouragement.

\end{document}